\newtheorem{theorem}{Theorem}[section]
\newtheorem{lemma}[theorem]{Lemma}
\newtheorem{proposition}[theorem]{Proposition}
\newtheorem{corollary}[theorem]{Corollary}
\theoremstyle{definition}
\newtheorem{definition}[theorem]{Definition}
\theoremstyle{remark}
\newtheorem{remark}[theorem]{Remark}
\numberwithin{equation}{section}
\def \la{\langle}
\def \ra{\rangle}
\def \pdt{\frac{\partial}{\partial t}}
\def \pdtu{\frac{\partial u}{\partial t}}
\def \pdtw{\frac{\partial w}{\partial t}}
\def \ov{\overline }
\def \un{\underline }
\begin{document}

\title{Heat Kernel and Essential Spectrum of Infinite Graphs}
\author{Rados{\l}aw K. Wojciechowski}
\address{Graduate Center of the City University of New York, 365 Fifth Avenue, New York, NY, 10016.}
\email{rwojciechowski@gc.cuny.edu}
\subjclass[2000]{Primary 39A12; Secondary 58J35}
\date{January 2008}

\begin{abstract}
We study the existence and uniqueness of the heat kernel on infinite, locally finite, connected graphs.  For general graphs, a uniqueness criterion, shown to be optimal, is given in terms of the maximal valence on spheres about a fixed vertex.  A sufficient condition for non-uniqueness is also presented.  Furthermore, we give a lower bound on the bottom of the spectrum of the discrete Laplacian and use this bound to give a condition ensuring that the essential spectrum of the Laplacian is empty.
\end{abstract}

\maketitle

\section{Introduction}
The first part of this paper investigates the stochastic completeness of the heat kernel on infinite, locally finite, connected graphs.  The heat kernel considered here is a real-valued function of a pair of vertices and a continuous time parameter and is the smallest non-negative fundamental solution for the discrete heat equation.  The second section of this paper outlines a construction of the heat kernel using an exhaustion argument.  Since this construction is well-known in the case of Riemannian manifolds \cite{Dod83} and was written up in full detail in \cites{Woj07, Web08} we omit some details.  The heat kernel will  generate a bounded solution of the heat equation given a bounded initial condition.  Stochastic completeness of the heat kernel is equivalent to the uniqueness of this solution.

In the third section of this paper we state a definition of stochastic incompleteness and give several equivalent conditions as presented in \cite{Gri99}.  We use one of these conditions, specifically, the existence of a bounded, positive, $\lambda$-harmonic function for a negative constant $\lambda$, to establish a criterion ensuring the stochastic completeness of the heat kernel on a general graph in terms of the maximum valence on spheres about a fixed vertex.  This result should be compared with a result of Dodziuk and Mathai \cites{DodMat06, Dod07} which gives uniqueness of bounded solutions of the heat equation for graphs of bounded valence and a result in \cite{Web08} which gives uniqueness under an assumption on the curvature of the graph.

We then give a similar condition implying the stochastic incompleteness the heat kernel on a general graph.  Specifically, we show that if the minimum number of edges leading away from a fixed vertex on the graph increases uniformly at a sufficient rate on spheres of increasing radii then the graph will be stochastically incomplete.  In fact, we show that it is sufficient that this condition holds in a subgraph of the entire graph, provided that the subgraph is connected to its complement at a single vertex.

We then show that our characterizations of stochastic completeness in terms of the minimum and maximum valence of vertices on spheres about a fixed vertex are optimal by introducing a family of trees we call \emph{model}.  By definition, these trees contain a vertex, here referred to as the \emph{root} for the model, such that the valence at every other vertex depends only on the distance from the root.  These trees are also sometimes called \emph{symmetric about the root} or \emph{radially symmetric} with the \emph{branching number} being the common valence on spheres \cite{Fuj96} but we call them model because they are the analogues of rotationally symmetric or model Riemannian manifolds \cites{Gri99, GreWu79}.  In the case of model trees the sufficient conditions mentioned above are also necessary.  In particular, these trees offer examples of infinite, stochastically incomplete graphs, that is, ones for which bounded solutions of the heat equation are not uniquely determined by initial conditions.  We also prove two inequalities comparing the heat kernel on a model tree and the heat kernel on a general graph.  These inequalities were inspired by an analogous result of Cheeger and Yau on Riemannian manifolds \cite{CheYau81}.

In the final part of this paper we study the spectrum of the Laplacian on a general graph.  We use a characterization of $\lambda_0(\Delta)$, the bottom of the spectrum of the Laplacian, in terms of Cheeger's constant established in \cites{Dod84, DodKen86} to give a lower bound for $\lambda_0(\Delta)$ under a curvature assumption on the graph.  Using this lower bound and the fact that, when Dirichlet boundary conditions are imposed, the essential spectrum of the Laplacian on the entire graph is the same as the essential spectrum of the graph with a finite subgraph removed \cites{DonLi79, Fuj96}, we then show that the essential spectrum is empty provided that the graph is branching rapidly and the curvature condition is satisfied.  This result should be compared with \cite{Kel07} where, for a graph whose Cheeger constant at infinity is positive, rapid branching is shown to be necessary and sufficient for the essential spectrum to be empty.

\subsection{Acknowledgments}
This paper is based on the author's $\textrm{Ph.D.}$ thesis \cite{Woj07} written under the guidance of J{\'o}zef Dodziuk.  It is a pleasure to thank Professor Dodziuk for his constant support and assistance while this work was carried out.  I would also like to thank the referee for many useful comments which greatly improved the exposition of the paper and Matthias Keller for stimulating discussions which led to improvement of results.

\section{The Heat Kernel}
\subsection{Preliminaries}
We now establish our notation and several basic lemmas which will be used throughout.  $G=(V(G),E(G))$ will denote an infinite, locally finite, connected graph without loops or multiple edges where $V=V(G)$ is the set of vertices of $G$ and $E=E(G)$ is the set of edges.  We will write $x \in G$ when $x$ is a vertex of $G$.  We use the notation $x \sim y$ to indicate that an edge connects vertices $x$ and $y$ while $[x,y]$ will denote an \emph{oriented} edge with initial vertex $x$ and terminal vertex $y$.  At times, to be able to write down certain formulas unambiguously, we assume that our graphs are oriented, that is, that every edge has a chosen, fixed orientation, but none of our results depend on the choice of this orientation.

We use the notation $m(x)$ to indicate the \emph{valence} at a vertex $x$, that is, $m(x)$ denotes the number of vertices connected by an edge to $x$. For any two vertices $x$ and $y$, $d(x,y)$, the \emph{distance} between $x$ and $y$, is the number of edges in the shortest path connecting $x$ and $y$.  We let $r(x)=d(x,x_0)$ denote the distance between $x$ and a fixed vertex $x_0$.  By a \emph{function on the graph} we mean a mapping $f: V \to \mathbf{R}.$  We will denote the set of all such functions by $C(V)$.  $C_0(V)$ will denote the set of all functions on $G$ with finite support. $C_0(V)$ is dense in $\ell^2(V)$, the space of all \emph{square summable} functions on $G$,
\[ \ell^2(V) = \Big \{ f:V \to \mathbf{R} \  \Big| \ \sum_{x \in V} f(x)^2 < \infty  \Big \} \]
which is a Hilbert space with inner product
\[ \la f, g \ra = \sum_{x \in V} f(x) g(x). \]
Similarly, we let $\ell^2(\tilde{E})$ denote the Hilbert space of all square summable functions on $\tilde{E}$, the set of all oriented edges of $G$,
\[ \ell^2(\tilde{E}) =  \Big\{ \varphi: \tilde{E} \to \mathbf{R} \ \Big| \sum_{[x,y] \in \tilde{E}} \varphi([x,y])^2 < \infty  \Big \} \]
with a similarly defined inner product.

If $f$ is a function on the vertices of $G$ then $d$, the \emph{coboundary operator}, maps $f$ to a function on oriented edges: $df([x,y]) = f(y) - f(x)$.  The \emph{Laplacian} $\Delta$ is defined by the formula
\begin{equation}\label{Laplacian}
\Delta f(x) = \sum_{y \sim x} \big(f(x) - f(y) \big) = m(x)f(x) - \sum_{y \sim x} f(y)
\end{equation}
where the summation runs over all neighbors of $x$.  It follows immediately from (\ref{Laplacian}) that the Laplacian is a bounded operator on $\ell^2(V)$ if and only if the valence of $G$ is bounded, that is, $m(x) \leq M$ for all vertices $x$.

If $D$ is a finite subgraph of $G$ then int $D$ will denote the \emph{interior} of $D$ which consists of vertices of $D$ all of whose neighbors are in $D$, that is, $\textrm{int } D = \{x \ | \ x \in D \textrm{ and if } y \sim x \textrm{ then } y \in D \}.$  The complementary set of vertices in $D$ is called the \emph{boundary} of $D$, denoted by $\partial D$, so that, $\partial D = \{x \ | \ x \in D \textrm{ and there exists } y \sim x \textrm{ such that } y \not \in D \}$.  An easy calculation gives the following analogue of Green's Theorem:

\begin{lemma}\label{Green's}  If $D$ is a finite subgraph of $G$ then for $f, g \in C(V)$
\[ \sum_{x \in D} \Delta f(x) g(x) = \sum_{[x,y] \in \tilde{E}(D)} df([x,y]) dg([x,y]) +  \sum_{\substack{x \in \partial D \\ z \sim x, z \not \in D}} \big(f(x) - f(z)\big) g(x). \]
\end{lemma}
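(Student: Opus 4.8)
The plan is to start from the definition of the Laplacian in (\ref{Laplacian}) and carry out a discrete summation by parts, the only real work being to keep careful track of which edges have both endpoints inside $D$ and which leave $D$ through its boundary. Writing out the left-hand side gives
\[ \sum_{x \in D} \Delta f(x) g(x) = \sum_{x \in D} \sum_{y \sim x} \big( f(x) - f(y) \big) g(x), \]
a sum over all ordered pairs $(x,y)$ with $x \in D$ and $y \sim x$. My first step is to split this double sum according to whether the neighbor $y$ lies in $D$ or not. A vertex $x \in D$ can have a neighbor $y \notin D$ only when $x \in \partial D$, so the terms with $y \notin D$ reassemble exactly into the boundary sum $\sum_{x \in \partial D,\, z \sim x,\, z \notin D} \big( f(x) - f(z) \big) g(x)$ appearing on the right.

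It then remains to identify the contribution of those ordered pairs $(x,y)$ with both $x,y \in D$ with the edge term $\sum_{[x,y] \in \tilde{E}(D)} df([x,y]) \, dg([x,y])$. Here I would group the two orientations of each unordered edge $\{x,y\}$ of $D$ together: their combined contribution is $\big( f(x) - f(y) \big) g(x) + \big( f(y) - f(x) \big) g(y) = \big( f(x) - f(y) \big)\big( g(x) - g(y) \big)$, which is precisely $df([x,y]) \, dg([x,y])$. Since this product is unchanged when the orientation of the edge is reversed, the resulting sum over the fixed orientations recorded in $\tilde{E}(D)$ is well defined and independent of the chosen orientation, in keeping with the convention fixed above. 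Combining the two pieces yields the claimed identity.

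The computation is entirely elementary, so there is no genuine analytic obstacle; the one point demanding care is the bookkeeping of orientations. Specifically, I must ensure that each interior edge is counted once in $\tilde{E}(D)$, so that no spurious factor of two appears, while each ordered pair $(x,y)$ with $y \notin D$ is routed into the boundary term exactly once. Verifying that the product $df \, dg$ is symmetric under reversal of orientation is what guarantees that this accounting is consistent, and it is the step I would state explicitly to justify passing from ordered pairs of adjacent vertices to the edge set $\tilde{E}(D)$.
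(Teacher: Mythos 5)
Your proof is correct, and it is precisely the ``easy calculation'' the paper alludes to: the paper states this lemma without proof, and your summation-by-parts --- splitting the double sum over ordered pairs according to whether the neighbor lies in $D$, then pairing the two orientations of each interior edge and using the symmetry of $df\,dg$ under orientation reversal --- is the standard argument being left to the reader. The only convention worth noting explicitly is that $D$ is taken to be a full (induced) subgraph, as with the balls $B_r$ in the paper, so that every edge of $G$ joining two vertices of $D$ indeed belongs to $\tilde{E}(D)$.
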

If $g$ vanishes on the boundary of $D$ then we may write this as $\la \Delta f, g \ra_{V(D)} = \la df, dg \ra_{\tilde{E}(D)}$.  Furthermore, if $f$ or $g$ is finitely supported then
\[ \la \Delta f, g \ra = \la df, dg \ra. \]

A function $u: V \times [0,\infty) \to \mathbf{R}$ is said to be a solution of the \emph{heat equation} if it is differentiable in the second parameter and satisfies
\[ \Delta u(x,t) + \pdtu (x,t) = 0 \]
for all vertices $x$ and all $t \geq 0$.  As in the smooth case we have the following maximum principles:

\begin{lemma}\label{maxheatlem1}
Suppose that $D$ is a finite, connected subgraph of $G$ and that the function $u: D \times [0,T] \to \mathbf{R}$ is differentiable in the second parameter and satisfies $\Delta u + \pdtu \leq 0 \ on \ \textup{int } D \times [0,T].$  Then
\[ \max_{D \times [0,T]} \ u = \max_{\substack{D \times \{0\} \ \cup\\ \partial D \times [0,T]}} \ u. \]
If $u$ satisfies $\Delta u + \pdtu \geq 0 \ on \ \textup{int } D \times [0,T]$ then
\[ \min_{D \times [0,T]} \ u = \min_{\substack{D \times \{0\} \ \cup\\ \partial D \times [0,T]}} \ u. \]
Therefore, if $u$ satisfies the heat equation on \textup{int} $D \times [0,T]$ then it attains both its maximum and its minimum on the parabolic boundary  $(D \times \{0\}) \ \cup \ (\partial D \times [0,T])$.
\end{lemma}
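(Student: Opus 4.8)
The plan is to establish the first (maximum) assertion and then deduce the minimum assertion by applying it to $-u$; the concluding statement about solutions of the heat equation follows at once, since such a $u$ satisfies both differential inequalities simultaneously. The strategy mirrors the classical parabolic maximum principle: first reduce to a \emph{strict} inequality by a linear-in-time perturbation, then rule out a space-time interior maximum by a purely local argument, and finally remove the perturbation. Throughout I write $\Gamma = (D \times \{0\}) \cup (\partial D \times [0,T])$ for the parabolic boundary.

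First I would introduce $v(x,t) = u(x,t) - \varepsilon t$ for a fixed $\varepsilon > 0$. Since $\pdtv = \pdtu - \varepsilon$ while $\Delta v = \Delta u$ (the Laplacian acts only in the space variable), on $\textup{int } D \times [0,T]$ one obtains $\Delta v + \pdtv \leq -\varepsilon < 0$, a strict subsolution inequality. Because $D$ has only finitely many vertices and $v$ is continuous in $t$ on the compact interval $[0,T]$, $v$ attains its maximum over $D \times [0,T]$ at some point $(x_0,t_0)$.

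The heart of the matter is to show that no maximizing point $(x_0,t_0)$ can have $x_0 \in \textup{int } D$ and $t_0 > 0$. Suppose such a point exists. On one hand, since $v(x_0,t_0) \geq v(y,t_0)$ for every neighbor $y$ of $x_0$, the defining formula $\Delta v(x_0,t_0) = \sum_{y \sim x_0}\big(v(x_0,t_0) - v(y,t_0)\big)$ gives $\Delta v(x_0,t_0) \geq 0$; this is the discrete substitute for the second-derivative test in space. On the other hand, the function $t \mapsto v(x_0,t)$ has a maximum at $t_0$, so its derivative vanishes if $t_0 \in (0,T)$ and is nonnegative if $t_0 = T$ (a one-sided derivative at the right endpoint), whence $\pdtv(x_0,t_0) \geq 0$ in either case. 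Adding the two inequalities yields $\Delta v + \pdtv \geq 0$ at $(x_0,t_0)$, contradicting the strict inequality. Hence every maximizing point lies on $\Gamma$, so $\max_{D \times [0,T]} v = \max_{\Gamma} v$.

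To finish, I would compare $u$ and $v$. From $0 \leq \varepsilon t \leq \varepsilon T$ we have $\max_{D \times [0,T]} u \leq \max_{D \times [0,T]} v + \varepsilon T$, while $v \leq u$ gives $\max_{\Gamma} v \leq \max_{\Gamma} u$; combining these with the identity just proved yields $\max_{D \times [0,T]} u \leq \max_{\Gamma} u + \varepsilon T$. Letting $\varepsilon \to 0$ and invoking the trivial reverse inequality establishes the maximum assertion. The minimum assertion follows by applying the result to $-u$, and the final statement is immediate for solutions of the heat equation. The only delicate point is the endpoint $t_0 = T$, where one must argue with a one-sided derivative rather than a vanishing one; the perturbation by $\varepsilon t$ is precisely what makes the strict inequality available, so that this local sign analysis produces a genuine contradiction.
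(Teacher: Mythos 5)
Your proof is correct and follows essentially the same route as the paper's: the same perturbation $u - \varepsilon t$ to obtain a strict differential inequality, the same local sign argument (nonnegative time derivative at a maximum, including the one-sided case $t_0 = T$, plus nonnegative discrete Laplacian at a spatial maximum) to exclude interior space-time maxima, and the same $\varepsilon \to 0$ limit to conclude. Your treatment is, if anything, slightly more explicit than the paper's about the endpoint $t_0 = T$ and the trivial reverse inequality.
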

\begin{proof}
We give a proof when $u$ satisfies $\Delta u + \pdtu \leq 0$ and note that the second statement of the lemma follows by considering $-u$.  Let $w= u - \epsilon t$ for $\epsilon > 0$ so that $\Delta w + \pdtw \leq -\epsilon < 0.$  If $w$ has a maximum at $(\hat{x},t_0) \in \textrm{int } D \times (0,T]$ then $\pdtw (\hat{x},t_0) \geq 0$ and $\Delta w(\hat{x},t_0) = \sum_{x \sim \hat{x}}\big( w(\hat{x},t_0) - w(x,t_0) \big) \geq 0$ yielding a contradiction.  Therefore, $w$ attains its maximum on $(D \times \{0\})  \cup (\partial D \times [0,T]).$
It follows that
\begin{eqnarray*}
\max_{D \times [0,T]} \ u &=&  \max_{D \times [0,T]} ( w + \epsilon t ) \ \leq \  \max_{D \times [0,T]} \ w + \epsilon T \\
&=&  \max_{\substack{D \times \{0\} \ \cup\\ \partial D \times [0,T]}} \ w + \epsilon T  \ \leq \  \max_{\substack{D \times \{0\} \ \cup\\ \partial D \times [0,T]}} \ u + \epsilon T .
\end{eqnarray*}
Since $\epsilon$ was arbitrary this implies that
\[ \max_{D \times [0,T]} \ u = \max_{\substack{D \times \{0\} \ \cup\\
\partial D \times [0,T]}} \ u.  \]
\end{proof}

\begin{lemma}\label{maxheatlem2}
Suppose that $D$ is a finite, connected subgraph of $G$ and that $u$ satisfies $\Delta u + \pdtu = 0$ on \textup{int }$D \times [0,T].$  If there exists $(\hat{x},t_0) \in \textup{int} \ D \times (0,T)$ such that $(\hat{x},t_0)$ is a maximum (or a minimum) for $u$ on $D \times [0,T]$ then $u(x,t_0) = u(\hat{x},t_0)$ for all $x \in D$.
\end{lemma}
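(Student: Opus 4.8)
The plan is to run the discrete analogue of the parabolic strong maximum principle, propagating the extremal value outward from $\hat{x}$ along edges. I treat the case where $(\hat{x},t_0)$ is a maximum; the minimum case follows by applying the result to $-u$, which again solves the heat equation. Write $M = u(\hat{x},t_0)$ for the maximal value of $u$ on $D \times [0,T]$.

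First I would extract information at $\hat{x}$ itself. Since $t_0 \in (0,T)$ lies strictly in the interior of the time interval and $(\hat{x},t_0)$ is a global maximum, the one-variable function $t \mapsto u(\hat{x},t)$ attains an interior maximum at $t_0$, so $\pdtu(\hat{x},t_0) = 0$. (This is where strict interiority in $t$ yields an equality rather than the mere inequality used in Lemma \ref{maxheatlem1}.) Because $\hat{x} \in \textup{int } D$ the heat equation holds there, giving $\Delta u(\hat{x},t_0) = -\pdtu(\hat{x},t_0) = 0$. Expanding,
\[ \Delta u(\hat{x},t_0) = \sum_{y \sim \hat{x}} \big( u(\hat{x},t_0) - u(y,t_0) \big) = \sum_{y \sim \hat{x}} \big( M - u(y,t_0) \big) = 0. \]
Each summand is nonnegative since $M$ is the global maximum, so every summand vanishes, i.e. $u(y,t_0) = M$ for every neighbour $y$ of $\hat{x}$.

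Next I would propagate this. Let $S = \{ x \in D : u(x,t_0) = M \}$, which is nonempty and, by the previous step, contains all neighbours of $\hat{x}$. For any $y \in S \cap \textup{int } D$ the point $(y,t_0)$ is again a global maximum with $y \in \textup{int } D$ and $t_0 \in (0,T)$, so the identical computation shows every neighbour of $y$ also lies in $S$. Iterating this and using the connectedness of $D$ to walk from $\hat{x}$ to an arbitrary vertex through interior vertices, I would conclude $S = D$, that is, $u(x,t_0) = M$ for all $x \in D$.

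The main obstacle is precisely this propagation step: the equation-based computation only fires at vertices of $\textup{int } D$, whereas the value $M$ must ultimately be pushed all the way out to $\partial D$. One must therefore verify that the maximum set $S$ genuinely spreads along edges to cover every vertex of $D$ — reaching each vertex by a chain whose earlier vertices lie in the interior, so that the heat equation is available at each step of the propagation. The delicate content is thus organizing this walk through $\textup{int } D$; once a chain reaches a vertex, membership in $S$ follows from its predecessor having been an interior maximum, and the pointwise maximum-principle computation at each interior vertex is then routine.
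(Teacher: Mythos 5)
Your argument is, step for step, the paper's own proof: interiority of $t_0$ in $(0,T)$ gives $\pdtu(\hat{x},t_0)=0$, the heat equation at the interior vertex $\hat{x}$ then gives $\Delta u(\hat{x},t_0)=0$, each summand $u(\hat{x},t_0)-u(y,t_0)$ is non-negative and hence vanishes, so all neighbours of $\hat{x}$ attain the extremal value; one then iterates. The paper compresses the iteration into the single phrase ``iterating the argument and using the assumption that $D$ is connected,'' which is exactly your propagation step.

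However, the obstacle you flag in your final paragraph is genuine, and neither your appeal to the connectedness of $D$ nor the paper's resolves it: the propagation only fires at vertices of $\textup{int}\,D$, and connectedness of $D$ does not guarantee that every vertex of $D$ can be reached from $\hat{x}$ by a walk whose non-terminal vertices are all interior. In fact, read for an arbitrary finite connected subgraph $D$, the statement is false. Take $G$ to be the path $a\sim b\sim c\sim d\sim e\sim f$ with one extra vertex $z\sim c$, and let $D$ be the induced subgraph on $\{a,b,c,d,e\}$; then $\textup{int}\,D=\{a,b,d\}$ and $\partial D=\{c,e\}$. The function $u(a,t)=u(b,t)=u(c,t)=1$, $u(d,t)=u(e,t)=1-e^{-t}$ satisfies the heat equation on $\textup{int}\,D\times[0,T]$ (at $a$ and $b$ both terms vanish; at $d$ one computes $\Delta u(d,t)=-e^{-t}=-\pdtu(d,t)$), and it attains its maximum at $(b,t_0)$ with $b\in\textup{int}\,D$ and $t_0\in(0,T)$, yet $u(d,t_0)<1=u(b,t_0)$: the maximum cannot propagate past the boundary vertex $c$, where no equation holds. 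What the iteration actually requires is that every vertex of $D$ be reachable from $\hat{x}$ by a path all of whose vertices, except possibly the last, lie in $\textup{int}\,D$. This holds in every application the paper makes of the lemma, since there $D$ is a ball $B_r$ and geodesics to and from the center $x_0$ run through $B_{r-1}\subseteq\textup{int}\,B_r$, so nothing downstream is affected; but your proof (like the paper's) should either add this reachability hypothesis explicitly or restrict to $D=B_r$, rather than asserting that connectedness of $D$ alone organizes the walk.
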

\begin{proof}
At either a maximum or minimum $\pdtu (\hat{x},t_0) = 0$ implying that $\Delta u(\hat{x},t_0) = \sum_{x \sim \hat{x}}\big(u(\hat{x},t_0) - u(x,t_0) \big) = 0.$ In either case, this implies that $u(x,t_0) = u(\hat{x},t_0)$ for all  $x \sim \hat{x}$. Iterating the argument and using the assumption that $D$ is connected gives the statement of the lemma.
\end{proof}

\subsection{Construction of the Heat Kernel}
As mentioned in the introduction, the construction given here follows the one on open manifolds presented in \cite{Dod83}*{Section 3} as we exhaust the graph by finite, connected subgraphs, recall a definition of the heat kernel with Dirichlet boundary conditions for each subgraph in the exhaustion, and then pass to the limit.  To make our objective precise, for $t\geq 0$ and vertices $x$ and $y$, $p_t(x,y)$, the \emph{heat kernel}, will be the smallest non-negative function smooth in $t$, satisfying the heat equation $\Delta p_t(x,y) + \pdt p_t(x,y) = 0$ in either $x$ or $y$, and satisfying $p_0(x,y) = \delta_{x}(y)$ where $\delta_{x}$ is the delta function at the vertex $x$.  The heat kernel will generate a bounded solution of the heat equation on $G$ for any bounded initial condition.  Precisely, for a function $u_0$ bounded on the vertices of $G$, $u(x,t) = \sum_{y \in V} p_t(x,y)u_0(y)$ is bounded, smooth for $t\geq0$, and satisfies
\begin{equation}\label{heatequation}
 \left\{  \begin{array}{ll}
 \Delta u(x,t) + \pdtu (x,t) = 0 & \textrm{ for } x \in V, \  t\geq0 \\
 u(x,0) = u_0(x) & \textrm{ for } x \in V.
 \end{array} \right.
\end{equation}

To begin the construction fix a vertex $x_0 \in V$ and let $B_r=B_r(x_0)$ denote the connected subgraph of $G$ consisting of those vertices in $G$ that are at most distance $r$ from $x_0$ and all edges of $G$ that such vertices span.  It follows that $B_r \subseteq B_{r+1}$ and $G= \cup_{r=0}^{\infty} B_r$.  We let $C(B_r, \partial B_r) = \{ f \in C(B_r) \ | \ f_{| \partial B_r} = 0 \}$ denote the set of functions on $B_r$ which vanish on $\partial B_r$ and let $\Delta_r$ denote the \emph{reduced Laplacian} which acts on this space  by
\[ \Delta_r f(x) = \left \{ \begin{array}{ll}
 \Delta f(x) & \textrm{for } x \in \textrm{int } B_r \\
 0 & \textrm{otherwise.}
 \end{array} \right. \]
By Lemma \ref{maxheatlem1} solutions of the heat equation on $B_r$ with Dirichlet boundary conditions are uniquely determined by initial data.  Furthermore, by applying Lemmas \ref{Green's} and \ref{maxheatlem2}, it follows that $\Delta_r$ has a finite set of real eigenvalues $0 < \lambda_0^r < \lambda_1^r \leq \ldots \leq \lambda_{k(r)}^r$ and a corresponding set of eigenfunctions  $\{ \phi_i^r\}_{i=0}^{k(r)}$ which form an orthonormal basis for $C(B_r, \partial B_r)$.    Since $\Delta_r$ acts on the finite dimensional vector space $C(B_r, \partial B_r)$ we can consider the operator $e^{-t\Delta_r} = I - t\Delta_r + \frac{t^2}{2}\Delta_r^2 - \frac{t^3}{6}\Delta_r^3 + \dots$ and let $p_t^r(x,y)$ denote its kernel.
\begin{definition}\label{Dirichletheatkernelsdefinition}
For vertices $x$ and $y$ in $B_r$ and $t\geq 0$ 
\[ p_t^r(x,y) = \la e^{-t\Delta_r}\delta_x, \delta_y \ra =  e^{-t\Delta_r}\delta_x(y). \]
\end{definition}

\begin{proposition} \label{Dirichletheatkernels}
$p_t^r(x,y)$ satisfies the following properties:
\begin{enumerate}
\item[1)]  $p_t^r(x,y) = p_t^r(y,x)$ and  $p_t^r(x,y) = 0$ if $x \in \partial B_r$ or $y \in \partial B_r$.
\item[2)]  $\Delta_r p_t^r(x,y) + \pdt p_t^r(x,y) = 0$ where $\Delta_r$ denotes the reduced Laplacian applied in either $x$ or $y$.
\item[3)] $p_{s+t}^{r}(x,y) = \sum_{z \in B_r}p_s^r(x,z) p_t^r(z,y)$.
\item[4)] $p_0^r(x,y) = \delta_x(y)$ \ for $x,y \in \textup{int } B_r$.
\item[5)]  $p_t^r(x,y) = \sum_{i=0}^{k(r)} e^{-\lambda_i^r t} \phi_i^r (x) \phi_i^r (y)$.
\item[6)] $p_t^r(x,y) > 0 \ \textrm{ for } t > 0, x,y \in \textup {int } B_r$.
\item[7)] $\sum_{y \in B_r} p_t^r(x,y) < 1  \  \textrm{ for } t > 0, x \in B_r$. \newline
\end{enumerate}
\end{proposition}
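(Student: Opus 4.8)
The plan is to take the eigenfunction expansion (5) as the foundation, derive (1)--(4) from it by direct computation, and then treat the positivity (6) and the sub-Markov property (7) separately with the maximum principles of Lemmas \ref{maxheatlem1} and \ref{maxheatlem2}. To establish (5), note that $\{\phi_i^r\}_{i=0}^{k(r)}$ is an orthonormal basis of $C(B_r,\partial B_r)$ consisting of eigenfunctions of the self-adjoint operator $\Delta_r$. For $x\in\textup{int }B_r$ the function $\delta_x$ lies in this space and expands as $\delta_x=\sum_i \la\delta_x,\phi_i^r\ra\phi_i^r=\sum_i\phi_i^r(x)\phi_i^r$; applying $e^{-t\Delta_r}$ termwise (legitimate in finite dimensions) gives $e^{-t\Delta_r}\delta_x=\sum_i e^{-\lambda_i^r t}\phi_i^r(x)\phi_i^r$, and evaluating at $y$ yields (5). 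Because every $\phi_i^r$ vanishes on $\partial B_r$, the right-hand side of (5) is zero whenever $x$ or $y$ lies on $\partial B_r$, which gives the boundary half of (1), while the symmetric form of the expansion gives the symmetry in (1). Differentiating (5) in $t$ and using $\Delta_r\phi_i^r=\lambda_i^r\phi_i^r$ yields (2), and setting $t=0$ yields (4), since $\sum_i\phi_i^r(x)\phi_i^r(y)=\delta_x(y)$ is the completeness relation for the orthonormal basis on $\textup{int }B_r$.

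For the semigroup law (3) I would substitute (5) into both factors, interchange the finite sums, and collapse the inner sum over $z\in B_r$ using orthonormality, $\sum_{z}\phi_i^r(z)\phi_j^r(z)=\la\phi_i^r,\phi_j^r\ra=\delta_{ij}$ (the boundary terms vanish), leaving $\sum_i e^{-\lambda_i^r(s+t)}\phi_i^r(x)\phi_i^r(y)=p_{s+t}^r(x,y)$. For the non-negativity half of (6) I would fix $x\in\textup{int }B_r$ and view $u(y,t)=p_t^r(x,y)$ as a solution of the heat equation in $y$ with initial data $\delta_x\geq0$ and vanishing boundary values; the minimum principle of Lemma \ref{maxheatlem1} forces $u\geq0$, since the minimum over the parabolic boundary is $0$.

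The strict positivity in (6) is the step I expect to be the main obstacle. I would argue by contradiction: if $p_{t_0}^r(x,y_0)=0$ for some $y_0\in\textup{int }B_r$ and $t_0>0$, then on a cylinder $B_r\times[0,T]$ with $T>t_0$ the non-negative solution $u(y,t)=p_t^r(x,y)$ attains its minimum value $0$ at the interior space-time point $(y_0,t_0)$, so Lemma \ref{maxheatlem2} gives $p_{t_0}^r(x,y)=0$ for all $y\in B_r$, in particular $p_{t_0}^r(x,x)=0$. Combining the semigroup property (3) with the symmetry (1) yields $p_{t_0}^r(x,x)=\sum_{z\in B_r}\big(p_{t_0/2}^r(x,z)\big)^2$, forcing $p_{t_0/2}^r(x,z)=0$ for every $z$ and in particular $p_{t_0/2}^r(x,x)=0$; iterating gives $p_{t_0/2^n}^r(x,x)=0$ for all $n$, which contradicts the continuity of $t\mapsto p_t^r(x,x)$ together with $p_0^r(x,x)=1$ from (4).

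Finally, for (7) I would set $F(x,t)=\sum_{y\in B_r}p_t^r(x,y)$ and check from (2) that it solves the heat equation in $x$, with $F(\cdot,0)=1$ on $\textup{int }B_r$ by (4) and $F\equiv0$ on $\partial B_r\times[0,\infty)$ by (1); Lemma \ref{maxheatlem1} then bounds the maximum over any cylinder by its value $1$ on the parabolic boundary, so $F\leq1$. Strictness follows exactly as in (6): an interior value $F(x_0,t_0)=1$ with $t_0>0$ would, by Lemma \ref{maxheatlem2}, force $F\equiv1$ at time $t_0$ on all of $B_r$, contradicting $F\equiv0$ on the boundary, which is nonempty because $G$ is infinite and connected. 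This covers all interior vertices, and for $x\in\partial B_r$ the inequality is trivial since $F\equiv0<1$.
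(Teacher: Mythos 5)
Your proposal is correct, but it organizes the proof differently from the paper and uses genuinely different mechanisms for 5), 6) and 7). The paper takes 1)--4) as immediate from the definition and the symmetry of $\Delta_r$, and proves 5) by a uniqueness argument: $q_t^r(x,y)=\sum_i e^{-\lambda_i^r t}\phi_i^r(x)\phi_i^r(y)$ and $p_t^r(x,y)$ both solve the Dirichlet heat equation with the same initial data and boundary values, so Lemma \ref{maxheatlem1} forces them to coincide. You instead prove 5) first, by expanding $\delta_x$ in the eigenbasis and applying $e^{-t\Delta_r}$ termwise, and then read 1)--4) off the expansion; this is equally rigorous and arguably more self-contained (the one caveat is that your derivation of 5) is for $x\in\textup{int }B_r$, so the boundary statement in 1) is really the definitional convention that $e^{-t\Delta_r}$ acts on $C(B_r,\partial B_r)$ --- the same convention the paper hides in ``clear from definitions''). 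For 6), both you and the paper use Lemma \ref{maxheatlem2} to spread a single interior zero to the whole ball at time $t_0$, but the final contradictions differ: the paper feeds $p_{t_0}^r(x,x)=0$ back into 5) to conclude $\phi_i^r\equiv 0$ on the interior, contradicting orthonormality, while you use the semigroup identity $p_{t_0}^r(x,x)=\sum_z\bigl(p_{t_0/2}^r(x,z)\bigr)^2$ and continuity at $t=0$; both work. The largest divergence is 7): the paper differentiates the heat content in $t$ and applies Green's theorem (Lemma \ref{Green's}), together with the positivity from 6), to show that $\sum_{y}p_t^r(x,y)$ is strictly decreasing from its initial value $1$, whereas you run a weak-then-strong maximum principle argument on $F(x,t)=\sum_y p_t^r(x,y)$, using $\partial B_r\neq\emptyset$ (correctly justified by infiniteness and connectedness of $G$) to get strictness. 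Your route avoids Lemma \ref{Green's} entirely and needs only the two maximum principles; the paper's computation buys a slightly stronger statement --- strict monotone decay of the heat content, i.e., heat genuinely leaks through the boundary --- which is the picture underlying the stochastic completeness discussion later in the paper.
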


\begin{proof}
1), 2), 3), and 4) are clear from definitions and from the fact that $\Delta_r$ is symmetric.  For 5), let $q_t^r(x,y) = \sum_{i=0}^{k(r)} e^{-\lambda_i^r t} \phi_i^r (x) \phi_i^r (y)$ and observe that, since $\{ \phi_i^r\}_{i=0}^{k(r)}$ forms an orthonormal basis for $C(B_r,\partial B_r)$, $q_t^r(x,y)$ satisfies $q_0^r(x,y) = \delta_x(y)$.  Hence, $p_t^r(x,y)$ and $q_t^r(x,y)$ are two functions satisfying the heat equation on the interior of $B_r$, both vanish on the boundary $\partial B_r$ and satisfy the same initial condition.  Therefore, by Lemma \ref{maxheatlem1}, $p_t^r(x,y) = q_t^r(x,y)$.  For 6), Lemma \ref{maxheatlem1} implies that $0 \leq p_t^r(x,y) \leq 1$ for all vertices $x$ and $y$ and $t \geq 0$.  If there exist $\hat{x}$ and $\hat{y}$ in the interior of $B_r$ and $t_0>0$ such that $p^r_{t_0}(\hat{x},\hat{y})=0$ then, by Lemma \ref{maxheatlem2}, since $p_t^r(x,y)$ satisfies the heat equation in both $x$ and $y$, it would follow that $p_{t_0}^r(x,y) = p_{t_0}^r(\hat{x},\hat{y}) = 0$ for all $x$ and $y$.  In particular, $p_{t_0}^r(x,x)=0$ implying, from 5), that $\phi_i^r(x)=0$ for all $i$ and all $x$ in the interior of $B_r$ contradicting the fact that $\{ \phi_i^r\}_{i=0}^{k(r)}$ is, by definition, an orthonormal basis for $C(B_r, \partial B_r)$.

To prove 7), we first note that, by 4), $\sum_{y \in B_r} p_0^r(x,y) = 1$ for $x \in \ $int $B_r$.  Then, applying Lemma \ref{Green's} and letting $\Delta_y$ denote the Laplacian applied in $y$, it follows that
\begin{eqnarray*}
\pdt \sum_{y \in \textup{int } B_r} p_t^r(x,y) &=& \sum_{y \in \textup{int }B_r} - \Delta_y p_t^r(x,y) \\
&=& \sum_{\substack{y \in \textup {int } B_r \\ z \sim y, \ z \in \partial B_r}} \Big( p_t^r(x,z) - p_t^r(x,y) \Big) < 0
\end{eqnarray*}
proving the claim.
\end{proof}

Extend each $p_t^r(x,y)$ to be defined for all vertices $x$ and $y$ in $G$ by letting it be $0$ for vertices outside of $B_r$.  Since $B_r \subseteq B_{r+1}$ it follows by applying Lemma \ref{maxheatlem1} that $p^r_t(x,y) \leq p^{r+1}_t(x,y)$ and, combining statements 6) and 7) from Proposition \ref{Dirichletheatkernels}, that $0\leq p_t^r(x,y) \leq 1.$  Therefore, the sequence of heat kernels $p_t^r(x,y)$ converges as $r \to \infty$ and the heat kernel $p_t(x,y)$ can be defined as the limit.
\begin{definition}
\[ p_t(x,y) = \lim_{r \to \infty} p_t^r(x,y) \textrm{ for  vertices } x \textrm{ and } y \textrm{ in } V \textrm{ and }  t \geq 0. \]
\end{definition}

Dini's Theorem implies that the convergence is uniform in $t$ on every compact subset of $[0, \infty)$.  To show that $p_t(x,y)$ is differentiable and satisfies the heat equation observe that $\pdt p_t^r(x,y) = -\Delta_{r,x} p_t^r(x,y) = \sum_{z \sim x} \big(p_t^r(z,y) - p_t^r(x,y) \big) \to -\Delta p_t(x,y)$ where the convergence is uniform on compact subsets of $[0,\infty)$.  Therefore, $p_t(x,y)$ is differentiable and satisfies the heat equation in both $x$ and $y$.  In fact, iterating this argument shows that $p_t(x,y)$ is smooth in $t$ on compact subsets of $[0,\infty)$.  From this argument and the corresponding properties of the heat kernels $p_t^r(x,y)$ statements 1) through 6) of the following theorem now follow.
\begin{theorem}\label{heatkernel}
$p=p_t(x,y)$ satisfies
\begin{enumerate}
\item[1)]  $p_t(x,y) = p_t(y,x)$ and $p_t(x,y) > 0$ for $t > 0$.
\item[2)]  $p$ is $C^{\infty}$ on $[0,\infty)$.
\item[3)]  $\Delta p_t(x,y) + \pdt p_t(x,y) = 0$ where $\Delta$ denotes the Laplacian applied in either $x$ or $y$.
\item[4)]  $p_{s+t}(x,y) = \sum_{z \in V}p_s(x,z) p_t(z,y)$.
\item[5)]  $\sum_{y \in V} p_t(x,y) \leq 1$ for $t \geq 0$, $x \in V$.
\item[6)]  $\lim_{t \to 0} \sum_{y \in V} p_t(x,y) = p_0(x,y) = \delta_x(y)$.
\item[7)]  $p$ is independent of the exhaustion used to define it.
\item[8)]  $p$ is the smallest non-negative function that satisfies Properties 3) and 6).
\end{enumerate}
\end{theorem}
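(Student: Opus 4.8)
The plan is to derive 1)--6) by passing to the limit $r \to \infty$ in the corresponding properties of the Dirichlet heat kernels $p_t^r$ from Proposition \ref{Dirichletheatkernels}, reserving the real work for the two genuinely new assertions 7) and 8), both of which I would prove from the inclusion-monotonicity $p_t^r \le p_t^{r+1}$ together with the maximum principle of Lemma \ref{maxheatlem1}. Symmetry, smoothness, positivity, and the heat equation 1)--3) transfer directly using the uniform-on-compacta convergence already established in the paragraph preceding the theorem; in particular $p_t(x,y) \ge p_t^r(x,y) > 0$ for $t > 0$ once $x,y \in \textup{int } B_r$, by monotonicity and part 6) of Proposition \ref{Dirichletheatkernels}. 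For the semigroup law 4) I would pass to the limit in $p_{s+t}^r(x,y) = \sum_{z \in B_r} p_s^r(x,z)\, p_t^r(z,y)$; since the zero-extended kernels are nonnegative and nondecreasing in $r$, monotone convergence justifies interchanging the limit with the now-infinite sum.

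The only subtlety in 5) and 6) is that the range of summation grows with $r$. For 5), part 7) of Proposition \ref{Dirichletheatkernels} gives $\sum_{y \in B_r} p_t^r(x,y) < 1$, and because $0 \le p_t^r(x,y) \uparrow p_t(x,y)$, monotone convergence yields $\sum_{y \in V} p_t(x,y) = \lim_{r \to \infty} \sum_{y} p_t^r(x,y) \le 1$. For 6), the identity $p_0(x,y) = \delta_x(y)$ follows from $p_0^r(x,y) = \delta_x(y)$ for $x,y \in \textup{int } B_r$, and the limit as $t \to 0$ is then immediate from the $C^\infty$ regularity asserted in 2).

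For 7) the key observation is that the comparison used to prove $p_t^r \le p_t^{r+1}$ in fact holds for any two nested finite connected subgraphs: if $D_1 \subseteq D_2$, then extending $p_t^{D_1}$ by zero and applying the minimum principle of Lemma \ref{maxheatlem1} to $p_t^{D_2} - p_t^{D_1}$ gives $p_t^{D_1} \le p_t^{D_2}$. Given two exhaustions $\{B_r\}$ and $\{B_s'\}$, write $p_t$ and $q_t$ for the respective limit kernels and $p_t^r$, $q_t^s$ for the associated Dirichlet kernels. Cofinality provides, for each $r$, an $s$ with $B_r \subseteq B_s'$, whence $p_t^r \le q_t^s \le q_t$; letting $r \to \infty$ gives $p_t \le q_t$, and the reverse inequality follows by symmetry, so $p_t = q_t$.

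For 8), let $\tilde p_t(x,y) \ge 0$ be any function satisfying the heat equation in its first variable (part of property 3) and the initial condition $\tilde p_0(x,y) = \delta_x(y)$ of property 6). Fixing $y$ and setting $w = \tilde p_t(\cdot, y) - p_t^r(\cdot, y)$ on $B_r \times [0,T]$, I would check that $w$ solves the heat equation on $\textup{int } B_r$, that $w \ge 0$ at $t = 0$ (it vanishes on the interior and is $\ge 0$ on $\partial B_r$ since $\tilde p \ge 0$), and that $w = \tilde p_t \ge 0$ on $\partial B_r \times [0,T]$ because $p_t^r$ vanishes on the boundary. The minimum principle then forces $w \ge 0$ throughout, i.e. $p_t^r(\cdot, y) \le \tilde p_t(\cdot, y)$, and letting $r \to \infty$ gives $p_t \le \tilde p_t$. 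I expect the main obstacle to lie precisely in these comparison arguments: one must arrange the auxiliary functions so that their boundary and initial data are correctly signed for Lemma \ref{maxheatlem1} to apply, and keep careful track of the zero-extension across the growing boundary $\partial B_r$. Once that bookkeeping is in place, the monotone convergence steps and the cofinality argument are routine.
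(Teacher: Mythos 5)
Your overall route coincides with the paper's: properties 1)--5) are obtained by passing to the limit in Proposition \ref{Dirichletheatkernels} (the monotone-convergence justifications you supply for 4) and 5) are exactly what is needed, since the zero-extended kernels increase in $r$), and 7) and 8) are deduced from Lemma \ref{maxheatlem1}. In fact your nested-domain comparison establishing $p_t^{D_1} \le p_t^{D_2}$ for 7), and the comparison function $w = \tilde p_t(\cdot,y) - p_t^r(\cdot,y)$ with nonnegative parabolic boundary data for 8), are precisely the arguments that the paper compresses into the single sentence ``7) and 8) follow by applying Lemma \ref{maxheatlem1}.''

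The one genuine gap is in your treatment of 6). Smoothness in $t$ (property 2)) does give the pointwise statement $\lim_{t\to 0} p_t(x,y) = p_0(x,y) = \delta_x(y)$ for each fixed pair $(x,y)$, but 6) also asserts $\lim_{t\to 0}\sum_{y \in V} p_t(x,y) = 1$, and there the limit $t \to 0$ must be interchanged with an \emph{infinite} sum over $y$; this is not ``immediate from $C^\infty$ regularity,'' since the convergence $p_t^r \to p_t$ is uniform in $t$ on compacta for each fixed $(x,y)$ but not uniform in $y$, and no summable dominating function is available a priori. This interchange is in fact the only point among 1)--6) for which the paper writes out an explicit argument: it squeezes $p_t^r(x,x) \le p_t(x,x) \le 1$, so that $p_t(x,x) \to 1$ as $t \to 0$ (because $p_t^r(x,x) \to p_0^r(x,x) = 1$), and then invokes 5) in the form $0 \le \sum_{y \ne x} p_t(x,y) \le 1 - p_t(x,x) \to 0$. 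Alternatively, your pointwise convergence plus Fatou's lemma gives $\liminf_{t\to 0}\sum_{y} p_t(x,y) \ge \sum_{y} \delta_x(y) = 1$, which combined with 5) yields the claim. Either one-line repair completes your proof; as written, however, the step is asserted rather than proved.
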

\begin{proof}
For 6), observe that $p_t^r(x,x) \leq p_t(x,x) \leq 1$ and $\sum_{y \in V}p_t(x,y) = p_t(x,x) + \sum_{y \in V, y \not = x} p_t(x,y) \leq 1$ and let $t \to 0$.  7) and 8) follow by applying Lemma \ref{maxheatlem1}.
\end{proof}

\begin{remark}
If $u_0$ denotes a bounded function on $G$ then, by combining the properties above, $u(x,t) = P_t u_0(x) =  \sum_{y \in V} p_t(x,y)u_0(y)$ is a bounded function, differentiable in $t$, continuous for $t\geq0$, satisfying (\ref{heatequation}).
\end{remark}

\begin{remark}
Letting $P_t u_0(x) = \sum_{y \in V}p_t(x,y) u_0(y)$ for $u_0 \in C_0(V)$, it is also true, as in \cite{Dod83}*{Proposition 4.5}, that $P_t u_0(x) = e^{-t \tilde{\Delta}} u_0(x)$ where $\tilde{\Delta}$ is the unique self-adjoint extension of $\Delta$ with domain $C_0(V)$ in $\ell^2(V)$.  The proof of this fact, and of the essential self-adjointness of the Laplacian, can be found in \cites{Web08, Woj07}.
\end{remark}

\section{Stochastic Incompleteness}

\subsection{Stochastic Incompleteness}
Let $\mathbf{1}$ denote the function which is 1 at every vertex of $G$.  From Property 5) of Theorem \ref{heatkernel} we know that, for every vertex $x$ and every $t \geq 0$, $P_t \mathbf{1}(x) = \sum_{y \in V} p_t(x,y) \leq 1.$
\begin{definition}
$G$ is \emph{stochastically incomplete} if for some vertex $x$ and some $t>0$ the heat kernel $p_t(x,y)$ on $G$ satisfies $\sum_{y \in V} p_t(x,y) < 1$.
\end{definition}

In the next theorem we single out several conditions which are equivalent to stochastic incompleteness.  In particular, stochastic incompleteness is equivalent to the non-uniqueness of bounded solutions of the heat equation.
\begin{theorem} \cite{Gri99}*{Theorem 6.2} \label{stochasticincomp}  The following statements are equivalent:
\begin{enumerate}
\item[1)] For some $t>0,$ some $x \in V$, $P_t \mathbf{1}(x) < 1.$
\item[$1')$] For every $t>0$, every $x \in V$, $P_t \mathbf{1}(x) < 1.$
\item[2)] For every $\lambda < 0$ there exists a positive (equivalently, non-zero) bounded function $v$ on $G$ satisfying $\Delta v = \lambda v$.
\item[$2')$] For every $\lambda < 0$ there exists a non-negative, non-zero, bounded function $v$ on $G$ satisfying $\Delta v \leq \lambda v$.
\item[3)]  There exists a non-zero, bounded function $u$ satisfying
\[ \left\{  \begin{array}{ll} \Delta u(x,t) + \pdtu(x,t) = 0 & \textrm{ for } x \in V, \ t\geq0\\
 u(x,0) = 0 & \textrm{ for }  x \in V.
 \end{array} \right.  \]
\end{enumerate}
\end{theorem}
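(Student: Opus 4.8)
The plan is to prove the five statements equivalent by establishing a web of implications that lets one pass from any condition to $(1)$ and from $(1)$ back to all the others: the trivial links $(1')\Rightarrow(1)$ and $(2)\Rightarrow(2')$, the chain $(1)\Rightarrow(2)\Rightarrow(2')\Rightarrow(1)$, and the two side-equivalences $(1)\Leftrightarrow(3)$ and $(1)\Leftrightarrow(1')$. Two tools recur throughout: the finite-subgraph comparison furnished by Lemma \ref{maxheatlem1} together with the minimality of the heat kernel (Theorem \ref{heatkernel}), and the Laplace transform, which converts statements about the flow $P_t\mathbf{1}$ into statements about the stationary equation $\Delta v=\lambda v$.

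I would first dispose of $(1)\Leftrightarrow(3)$ and $(1)\Leftrightarrow(1')$, since both rest on the exhaustion comparison. For $(1)\Rightarrow(3)$, the function $u=\mathbf{1}-P_t\mathbf{1}$ is a bounded solution of the heat equation with $u(x,0)=0$, nonzero exactly when $G$ is stochastically incomplete. For the converse, given any bounded solution $u$ with zero initial data and $c=\sup|u|$, the functions $c\mathbf{1}\pm u$ are non-negative solutions with initial data $c\mathbf{1}$; comparing each on every $B_r$ with the Dirichlet solution (Lemma \ref{maxheatlem1}) and letting $r\to\infty$ gives $cP_t\mathbf{1}\le c\mathbf{1}\pm u$, i.e. $|u|\le c(\mathbf{1}-P_t\mathbf{1})$, so $u\not\equiv 0$ forces incompleteness. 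The dichotomy $(1)\Rightarrow(1')$ I would get from the semigroup law: $t\mapsto P_t\mathbf{1}(x)$ is non-increasing, and if $P_{t_0}\mathbf{1}(x_0)=1$ then writing $P_{t_0}\mathbf{1}(x_0)=\sum_z p_s(x_0,z)P_{t_0-s}\mathbf{1}(z)$ and using $p_s(x_0,z)>0$ (Theorem \ref{heatkernel}) with connectedness forces $P_\tau\mathbf{1}\equiv 1$ for all $\tau$; contrapositively, incompleteness at a single point and time gives $P_t\mathbf{1}(x)<1$ for every $x$ and every $t>0$.

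For $(1)\Rightarrow(2)$ I would take the Laplace transform: writing $\lambda=-s<0$, set $v(x)=s\int_0^\infty e^{-st}\,\big(1-P_t\mathbf{1}(x)\big)\,dt$. Then $0\le v\le 1$, and differentiating under the integral and integrating by parts (using that $1-P_t\mathbf{1}$ solves the heat equation with zero initial data) gives $\Delta v=\lambda v$, while non-triviality follows from $(1)$ and continuity in $t$. To upgrade ``nonzero'' to ``positive'' I would invoke the elliptic minimum principle: at a zero of the non-negative $v$ one has $\Delta v(x)=-\sum_{y\sim x}v(y)\le 0$ and simultaneously $\Delta v(x)=\lambda v(x)=0$, so $v$ vanishes at every neighbour and hence, by connectedness, everywhere — a contradiction. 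This yields $(2)$, and $(2)\Rightarrow(2')$ is immediate.

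The main obstacle is closing the loop with $(2')\Rightarrow(1)$, where one must travel from a stationary inequality back to a genuine defect in the heat flow. Here I would pass to the resolvent $G_s\mathbf{1}(x)=\int_0^\infty e^{-st}P_t\mathbf{1}(x)\,dt$, which satisfies $(\Delta+s)G_s\mathbf{1}=1$, obeys $0\le G_s\mathbf{1}\le 1/s$, and is the \emph{minimal} non-negative solution of $(\Delta+s)w=1$, arising as the monotone limit of the Dirichlet resolvents on the $B_r$. Given the bounded $v\ge 0$, $v\not\equiv 0$, with $\Delta v\le\lambda v$ normalised to $0\le v\le 1$, I would form the competitor $w=(1-v)/s\ge 0$; since $\Delta v+sv\le 0$ one computes $(\Delta+s)w=1-(\Delta v+sv)/s\ge 1$, so $w$ is a non-negative supersolution. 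The comparison $G_s\mathbf{1}\le w$ then follows from the minimum principle for $\Delta+s$ with $s>0$ — a negative interior minimum of $w-G_s^r\mathbf{1}$ is impossible because there $(\Delta+s)(w-G_s^r\mathbf{1})\ge 0$ — applied on each $B_r$ and passed to the limit. Since $w<1/s$ at any vertex where $v>0$, and such a vertex exists, this gives $G_s\mathbf{1}\not\equiv 1/s$, hence $P_t\mathbf{1}\not\equiv 1$. The delicate part is establishing that $G_s\mathbf{1}$ is genuinely minimal, so that the stationary supersolution $w$ can constrain the true flow, and transferring the finite-subgraph comparisons correctly to the limit $r\to\infty$; by contrast the interchanges of $\Delta$ with the Laplace integral are harmless, since $\Delta$ acts locally.
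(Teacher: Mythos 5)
Your proof is correct, but its logical architecture differs genuinely from the paper's. The paper proves $1)\Leftrightarrow 1')\Rightarrow 2)\Leftrightarrow 2')$ together with $2)\Rightarrow 3)\Rightarrow 1)$; its hard implication is $2')\Rightarrow 2)$, done by an \emph{elliptic} exhaustion (solve $\Delta v_r=\lambda v_r$ on int $B_r$ with $v_r\equiv 1$ on $\partial B_r$, show $1\geq v_r\geq v_{r+1}\geq w$ by maximum principles, pass to the decreasing limit), and it re-enters the parabolic picture via $2)\Rightarrow 3)$ by comparing $e^{-\lambda t}v$ with $P_t v$. You instead close the loop with $(2')\Rightarrow(1)$ directly, through the resolvent $G_s\mathbf{1}=\int_0^\infty e^{-st}P_t\mathbf{1}\,dt$: the Dirichlet resolvents $G_s^r\mathbf{1}$ satisfy $(\Delta+s)G_s^r\mathbf{1}=1$ on int $B_r$ and vanish on $\partial B_r$, the minimum principle for $\Delta+s$ gives $G_s^r\mathbf{1}\leq w=(1-v)/s$, and monotone convergence (using $p_t^r\uparrow p_t$) passes this to the limit, so $G_s\mathbf{1}<1/s$ wherever $v>0$, forcing $P_t\mathbf{1}\not\equiv 1$. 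The step you flag as delicate --- minimality of $G_s\mathbf{1}$ --- is in fact nothing beyond the finite-exhaustion comparison you already describe, and the interchanges of $\Delta$ with the Laplace integral are indeed harmless since $\Delta$ is a finite sum at each vertex (the remark after Theorem \ref{heatkernel} supplies the differentiability of $P_t\mathbf{1}$ needed for the integration by parts). Note also that your $(1)\Rightarrow(2)$ is the paper's Laplace transform in disguise, since $s\int_0^\infty e^{-st}\bigl(1-P_t\mathbf{1}\bigr)\,dt=1+\lambda\int_0^\infty e^{\lambda t}P_t\mathbf{1}\,dt$ with $\lambda=-s$; the difference is that your minimum-principle upgrade to strict positivity lets you start from 1) rather than $1')$. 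What the paper's longer route buys is the explicit construction in $2')\Rightarrow 2)$, which it reuses later (in the proof that $\lambda\leq\lambda_0(\Delta)$ admits positive $\lambda$-harmonic functions); what your route buys is a cleaner implication graph that avoids the paper's somewhat circular-sounding treatment of $2)\Rightarrow 2')$.

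One loose end: condition 2) asserts that ``positive'' may equivalently be weakened to ``non-zero,'' and your web never starts from a merely non-zero, possibly sign-changing $\lambda$-harmonic $v$ --- your ``$(2)\Rightarrow(2')$ is immediate'' only holds under the positive reading. The fix is one line: if $\Delta v=\lambda v$, then $\Delta|v|(x)=m(x)|v(x)|-\sum_{y\sim x}|v(y)|\leq m(x)|v(x)|-\bigl|\sum_{y\sim x}v(y)\bigr|=\lambda|v(x)|$, since $\sum_{y\sim x}v(y)=(m(x)-\lambda)v(x)$ and $m(x)-\lambda>0$; thus $|v|$ witnesses $2')$, and your loop $(2')\Rightarrow(1)\Rightarrow(2)$ then produces a positive $\lambda$-harmonic function. (The paper handles this point by tracing through its chain $2)\Rightarrow 3)\Rightarrow 1)\Rightarrow 1')\Rightarrow 2)$.)
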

\begin{proof}
That $1')$ implies 1) is obvious.  To show that 1) implies $1')$ note that if there exists a $t_0$ such that $P_{t_0} \mathbf{1} = \mathbf{1}$ then by the semi-group property, part 4) in Theorem \ref{heatkernel}, it follows that $P_t \mathbf{1}=\mathbf{1}$ for all $t$.  Similarly, if for some $x$, $P_t \mathbf{1}(x)=1$ then the proof of Lemma \ref{maxheatlem2} will imply that $P_t \mathbf{1}(x)=1$ for all vertices $x$.

To show that $1')$ implies 2), let $u(x,t)=P_t \mathbf{1}(x) < 1$ and $w(x)=\int_0^{\infty}e^{\lambda t} u(x,t) dt$ for $\lambda<0$.  Estimating gives $0 < w <-\frac {1}{\lambda}$ while integration by parts yields $\Delta w=1 + \lambda w.$  Letting $v=1+\lambda w$ it follows that $v$ satisfies $0<v<1$ and $\Delta v = \lambda v$.

To show that $2')$ implies 2) we fix a vertex $x_0$ and let $B_r=B_r(x_0)$ as in the construction of the heat kernel so that $B_r \subseteq B_{r+1}$ and $G=\cup_{r=0}^\infty B_r.$  We first show that, for every $\lambda<0$, there exists a unique function $v_r$ satisfying
\begin{equation}\label{fredholm}
 \left\{  \begin{array}{ll} \Delta v_r(x) - \lambda v_r(x) = 0 & \textrm{for $x \in$ int } B_r\\
 {v_r}_{| \partial B_r} = 1.
 \end{array} \right.
\end{equation}
This follows since the associated homogeneous system obtained by replacing ${v_r}_{| \partial B_r} = 1$ with ${v_r}_{| \partial B_r} = 0$ in (\ref{fredholm}) has only trivial solutions.  To see this, let $w$ be a solution of the homogeneous system and suppose that $w$ is non-zero.  We may then assume that there exists $\hat{x} \in$ int $B_r$ such that $w(\hat{x}) >0 $ and $\hat{x}$ is a maximum for $w$ on the interior of $B_r$.  This implies that $\Delta w(\hat{x}) \geq 0$ while $\Delta w(\hat{x}) = \lambda w(\hat{x}) <0$.  The contradiction establishes the existence and uniqueness of a solution to (\ref{fredholm}).

Let, therefore, $v_r$ denote the unique solution of $(\ref{fredholm})$.  We claim that $0<v_r<1$ on the interior of $B_r$ and that, if we extend each $v_r$ to be 1 outside of $B_r$, then $v_r \geq v_{r+1}$.  To show that $v_r>0$ assume that there exists an $\hat{x}$ in the interior of $B_r$ such that $v_r(\hat{x}) \leq 0$.  We may assume that $\hat{x}$ is a minimum for $v_r$ from which it follows that $\Delta v_r(\hat{x}) \leq 0$.  On the other hand, $\Delta v_r(\hat{x})=\lambda v_r(\hat{x}) \geq 0$ which implies that $v_r(x)=v_r(\hat{x})$ for all $x$ connected to $\hat{x}$.  Iterating this argument gives a contradiction since ${v_r}_{|\partial B_r} =1$.  The other claims are proved in a similar manner.  Therefore, $\{ v_r \}_{r=0}^{\infty}$ is a non-increasing sequence of bounded functions and we let $\lim_{r \to \infty} v_r = v$ with $0 \leq v \leq 1$ and $\Delta v = \lambda v$.   It remains to show that $v$ is non-zero.  By arguments given below this will imply that there then exists a positive, bounded function satisfying $\Delta v = \lambda v$.   Now, by assumption, there exists a bounded function $w$ such that $\Delta w \leq \lambda w$ and $w$ is non-negative and non-zero.  Assuming that $|w|<1$, it can be shown that $v_r \geq w$ by a maximum principle argument as used above.  It follows that $v$ is non-zero by letting $r \to \infty$.

To show that 2) implies 3) suppose that $v$ is a non-zero, bounded function satisfying $\Delta v = \lambda v$ for $\lambda <0$.  It follows that, $e^{-\lambda t} v$ and $P_t v$ are both bounded solutions of
\[ \left\{  \begin{array}{ll} \Delta u(x,t) + \pdtu(x,t) = 0 & \textrm{for } (x,t) \in V \times [0,T]\\
 u(x,0) = v(x) & \textrm{for } x \in V
 \end{array} \right. \]
and, for $t>0$, $|e^{-\lambda t} v(x)|>|v(x)|$ while $|P_t v(x)| \leq \sup_{y \in V} |v(y)|$.  Therefore, by considering the difference of $e^{-\lambda t} v$ and $P_t v$, there exists a non-zero, bounded solution of the heat equation with initial condition equal to 0 for a bounded time interval.  By the argument given below this is enough to imply condition 1) which is equivalent to $1')$ and given $1')$, that is, $P_t \mathbf{1} < \mathbf{1}$, it follows that $\mathbf{1} - P_t \mathbf{1}$ will give a non-zero, bounded solution of the heat equation with zero initial condition for all time thereby completing the proof.

To show that 3) implies 1) suppose that $u(x,t)$ satisfies the conditions in 3).  We may assume, by rescaling if necessary, that $|u| < 1$ and that there exists a vertex $\hat{x}$ and $t_0>0$ such that $u(\hat{x},t_0)>0$.  It follows that $w(x,t) = 1 - u(x,t)$ is bounded, positive, and satisfies
\begin{equation} \label{heat1}
\left\{  \begin{array}{ll} \Delta w(x,t) + \pdtw(x,t) = 0 & \textrm{for }(x,t) \in V \times [0,T]\\
 w(x,0) = 1 & \textrm {for } x \in V
 \end{array} \right.
\end{equation}
with $w(\hat{x},t_0) < 1$.  Since $P_t \mathbf{1}$ is, by construction, the smallest positive solution to (\ref{heat1}) it follows that $P_{t_0} \mathbf{1}(\hat{x}) \leq w(\hat{x},t_0) < 1.$

We now show that 2) implies $2')$.  If $v$ is positive, bounded, and satisfies $\Delta v = \lambda v$ then the implication is clear.  If  the function is only non-zero then, tracing through the proof of the implications $2) \Rightarrow 3) \Rightarrow 1) \Rightarrow 1') \Rightarrow 2)$ given above, yields the existence of a positive, bounded function satisfying the equation $\Delta v = \lambda v$ making the implication, once more, immediate.

Therefore, we have shown the implications $1) \Leftrightarrow 1') \Rightarrow 2) \Leftrightarrow 2')$ and $2) \Rightarrow 3) \Rightarrow 1)$ which concludes the proof.
\end{proof}

\begin{definition}
A function $v$ satisfying $\Delta v = \lambda v$ for a constant $\lambda$ is called \emph{$\lambda$-harmonic}.  A function satisfying $\Delta v \leq \lambda v$ is called \emph{$\lambda$-subharmonic.}
\end{definition}

Therefore, stochastic incompleteness is equivalent to the existence, for every negative $\lambda$, of a bounded, positive, $\lambda$-harmonic (or $\lambda$-subharmonic) function and this criterion will be used to prove several results below.

\subsection{General Graphs}
For any vertex $x_0$ of $G$ we let $S_r(x_0)$, the \emph{sphere of radius $r$ about $x_0$}, denote the set of vertices that are exactly distance $r$ from $x_0$ and let $M_{x_0}(r)$ denote the maximum valence of the vertices in $S_r(x_0)$.  We now give a criterion for the stochastic completeness of a general graph.

\begin{theorem}\label{generalgraphs}
If $G$ is a graph with a vertex $x_0$ such that $M_{x_0}(r) = M(r) = \max_{x \in S_r(x_0)} m(x)$ satisfies
\[ \sum_{r=0}^\infty \frac{1}{M(r)} = \infty \]
then $G$ is stochastically complete.
\end{theorem}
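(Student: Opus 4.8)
The plan is to use the characterization recorded just after Theorem \ref{stochasticincomp}: to prove that $G$ is stochastically complete it suffices to exhibit a single $\lambda < 0$ for which no bounded, positive, $\lambda$-harmonic function exists, and I will in fact establish this for \emph{every} $\lambda < 0$. So suppose, toward a contradiction, that $v$ is positive, bounded, and satisfies $\Delta v = \lambda v$ with $\lambda < 0$. Writing $\beta = -\lambda > 0$ and expanding (\ref{Laplacian}), the equation becomes $\sum_{y \sim x} v(y) = (m(x) + \beta)\, v(x)$ at every vertex $x$.

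The engine of the argument is a \emph{greedy increasing path}. Since the average of $v$ over the neighbors of $x$ equals $(1 + \beta/m(x))\, v(x)$, and the maximum over those neighbors is at least this average, every vertex $x$ has a neighbor $y$ with $v(y) \ge (1 + \beta/m(x))\, v(x)$. Starting from the fixed root $x_0$ and repeatedly passing to such a neighbor, I obtain a path $x_0, x_1, x_2, \dots$ with
\[ v(x_n) \ge v(x_0) \prod_{i=0}^{n-1} \Big( 1 + \frac{\beta}{m(x_i)} \Big). \]
Because $\beta/m(x_i) > 0$, the values $v(x_i)$ are strictly increasing, so the vertices $x_i$ are pairwise distinct.

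Next I would translate the hypothesis $\sum_r 1/M(r) = \infty$ into a lower bound on $\sum_i 1/m(x_i)$. Since the graph is locally finite, every ball is finite, so a path of distinct vertices must eventually leave each ball; thus $r(x_i) \to \infty$. As $x_{i+1} \sim x_i$ forces $|r(x_{i+1}) - r(x_i)| \le 1$ and $r(x_0) = 0$, the sequence $r(x_i)$ assumes every nonnegative integer value. Using $m(x_i) \le M(r(x_i))$ and grouping the terms of the path-sum by distance gives $\sum_i 1/m(x_i) \ge \sum_i 1/M(r(x_i)) \ge \sum_{r=0}^{\infty} 1/M(r) = \infty$. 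Consequently the product above diverges, $v(x_n) \to \infty$, contradicting the boundedness of $v$. This contradiction shows no such $v$ exists and hence, by Theorem \ref{stochasticincomp}, that $G$ is stochastically complete.

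I expect the main obstacle to lie in the geometric bookkeeping of the third step rather than in the analytic estimate. A tempting but inconclusive alternative is to sum the equation over balls and track the total mass $\sum_{x \in B_r} v(x)$; this grows like $\prod_r (1 + \beta/M(r))$, but on a graph with rapidly increasing volume such growth is perfectly consistent with a bounded $v$, so it yields no contradiction. The decisive move is therefore to argue \emph{pointwise} along a single path and to verify carefully that this path meets every sphere about $x_0$, so that the divergence of $\sum 1/M(r)$ is genuinely felt; it is the distinctness of the $x_i$ together with local finiteness that makes this verification go through.
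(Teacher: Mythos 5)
Your proof is correct and follows essentially the same route as the paper's: a greedy path $x_0, x_1, x_2, \dots$ along which $v(x_{i+1}) \geq \left(1 - \lambda/m(x_i)\right) v(x_i)$, followed by divergence of the resulting product via $\sum_{r} 1/M(r) = \infty$. If anything, your geometric bookkeeping is more careful than the paper's, which asserts without justification that the greedy neighbor chosen at step $r$ lies in $S_r(x_0)$ (the averaging argument alone allows it to lie in the same or the previous sphere); your chain of observations---the $x_i$ are distinct since $v$ increases strictly along the path, balls are finite by local finiteness so $r(x_i) \to \infty$, steps change $r$ by at most $1$ so $i \mapsto r(x_i)$ hits every nonnegative integer, and then grouping terms by sphere gives $\sum_i 1/m(x_i) \geq \sum_r 1/M(r)$---is exactly what is needed to close that step rigorously.
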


\begin{remark}  This theorem should be compared with \cite{DodMat06}*{Theorem 2.10} which gives stochastic completeness under the assumption that the valence of the graph is bounded, that is, $m(x) \leq M$ for all vertices $x$.  The proof in \cite{DodMat06} uses a maximum principle argument to establish the uniqueness of bounded solutions of the heat equation with bounded initial data analogous to the case of a complete Riemannian manifold whose Ricci curvature is bounded from below \cite{Dod83}*{Theorem 2.2}. If we let $r(x)=d(x,x_0)$ then the argument in \cite{DodMat06} can be used to show stochastic completeness when $\Delta r(x) \geq -C$ for $C\geq0$ \cite{Web08}*{Theorem 4.13}.  Our approach is different in that we study $\lambda$-harmonic functions instead of
considering bounded solutions of the heat equation and our result shows that many graphs with $\Delta r \to -\infty$ as $r$ tends to infinity are stochastically complete.  For example, in Corollary \ref{models} below, we show that a model tree $T_n$ with branching number $n(i)$ is stochastically complete if and only if $\sum_{i=0}^{\infty} \frac{1}{n(i)} = \infty$ and, for such trees, $\Delta r(x) = 1 - n(i)$ for $x \in S_i(x_0)$.
\end{remark}

\begin{proof}
Let $v$ be a positive, $\lambda$-harmonic function on $G$ for $\lambda<0$.  By Theorem \ref{stochasticincomp} it suffices to show that $v$ is not bounded.  At $x_0$, $\Delta v(x_0) = \lambda v(x_0)$ yields $\sum_{x \sim x_0} v(x) = \big( m(x_0) - \lambda \big) v(x_0)$ implying that there exists $x_1 \sim x_0$ such that
\[ v(x_1) \geq \left(1-\frac{\lambda}{m(x_0)} \right) v(x_0). \]
By repeating the argument there exists a vertex $x_2 \in S_2(x_0)$ such that
\[ v(x_2) \geq \left(1-\frac{\lambda}{m(x_1)} \right) v(x_1) \geq \left(1-\frac{\lambda}{m(x_1)} \right) \left(1-\frac{\lambda}{m(x_0)} \right) v(x_0) .\]
Iterating the argument and noting that $\sum_{r=0}^\infty \frac{1}{M(r)} = \infty$ implies $\sum_{r=0}^\infty \frac{1}{m(x_r)} = \infty$ which, as $\lambda < 0$, is equivalent to $\prod_{r=0}^\infty \left(1 - \frac{\lambda}{m(x_r)}\right) = \infty$ \cite{Hil59}*{Theorem 8.6.1} the conclusion follows.
\end{proof}

In order to state an analogous criterion for the stochastic incompleteness of a graph we first introduce some more notation.  As before, we let $x_0$ be a fixed vertex of $G$ and, for $x \in S_r(x_0)$, let $m_{ \pm 1}(x) = | \{y \sim x \ | \ y \in S_{r \pm 1}(x_0) \} |$ denote the number of neighbors of $x$ in $S_{r \pm 1}(x_0)$, the next or previous sphere.  Let $\un{m}_{+1}(r)$ denote the minimum of $m_{+1}(x)$ for $x \in S_r(x_0)$ and $\ov{m}_{-1}(r)$ the maximum of $m_{-1}(x)$ for $x \in S_r(x_0)$.

\begin{theorem} \label{generalincompletegraphs}
If $G$ is a graph with a vertex $x_0$ such that \\
$\un{m}_{+1}(r) = \min_{x \in S_r(x_0)} m_{+1}(x)$ and $\ov{m}_{-1}(r) = \max_{x \in S_r(x_0)} m_{-1}(x)$ satisfy
\[ \sum_{r=1}^\infty \frac{  \ov{m}_{-1}(r)}  {\un{m}_{+1}(r)} < \infty \]
then $G$ is stochastically incomplete.
\end{theorem}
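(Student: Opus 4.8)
The plan is to exhibit, for every $\lambda < 0$, a bounded, positive, $\lambda$-subharmonic function on $G$, which verifies condition $2')$ of Theorem \ref{stochasticincomp} and hence gives stochastic incompleteness. Since the hypothesis involves only the radial quantities $\ov{m}_{-1}(r)$ and $\un{m}_{+1}(r)$, I would search for a \emph{radial} function $v(x) = f(r(x))$ depending only on the distance $r(x)$ to $x_0$, where $f$ is to be built as an increasing, bounded sequence with $f(0) > 0$. This directly generalizes the model-tree case, where $\ov{m}_{-1}(r) = 1$ and $\un{m}_{+1}(r) = n(r)$.

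The first step is to compute $\Delta v$ on a sphere. For $x \in S_r(x_0)$ the neighbors of $x$ split into those in $S_{r-1}$, in $S_r$, and in $S_{r+1}$; the same-sphere neighbors contribute nothing, so
\[ \Delta v(x) = m_{-1}(x)\big(f(r)-f(r-1)\big) + m_{+1}(x)\big(f(r)-f(r+1)\big). \]
Writing $b_r = f(r+1) - f(r) \ge 0$ and using $m_{-1}(x) \le \ov{m}_{-1}(r)$ in the first (nonnegative) bracket together with $m_{+1}(x) \ge \un{m}_{+1}(r)$ in the second (nonpositive) bracket gives the pointwise estimate $\Delta v(x) \le \ov{m}_{-1}(r)\, b_{r-1} - \un{m}_{+1}(r)\, b_r$. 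I would then \emph{define} the increments recursively so that this bound equals $\lambda f(r)$, namely
\[ \un{m}_{+1}(r)\, b_r = \ov{m}_{-1}(r)\, b_{r-1} + |\lambda|\, f(r), \qquad f(r) = f(0) + \sum_{k=0}^{r-1} b_k, \]
with $f(0) > 0$ chosen arbitrarily and the conventions $\ov{m}_{-1}(0) = 0$, $\un{m}_{+1}(0) = m(x_0)$ handling the vertex $x_0$. (The summability hypothesis forces $\un{m}_{+1}(r) \ge 1$, so the recursion is well defined.) By construction $\Delta v(x) \le \lambda v(x)$ at every vertex, and an immediate induction gives $b_r \ge 0$, so $f$ is increasing and $v$ is positive.

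The main obstacle, and the only place the hypothesis is used, is proving that $f$ is bounded. Here I would set $c_r = \ov{m}_{-1}(r)/\un{m}_{+1}(r)$ and $d_r = 1/\un{m}_{+1}(r)$; the assumption gives $\sum_r c_r < \infty$, and since $\ov{m}_{-1}(r) \ge 1$ for $r \ge 1$ (every vertex at distance $r$ has a neighbor at distance $r-1$) one also gets $d_r \le c_r$ for $r \ge 1$, hence $\sum_r d_r < \infty$. Dividing the recursion by $f(r)$ and using $b_{r-1} = f(r) - f(r-1) \le f(r)$ yields
\[ \frac{f(r+1)}{f(r)} = 1 + \frac{b_r}{f(r)} \le 1 + c_r + |\lambda|\, d_r, \]
so that $f(r+1) \le f(0)\prod_{k=0}^{r}\big(1 + c_k + |\lambda|\, d_k\big)$. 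Since $\sum_k (c_k + |\lambda|\, d_k) < \infty$, this infinite product converges, and therefore $f$ is bounded. Thus $v$ is a bounded, positive, $\lambda$-subharmonic function; as $\lambda < 0$ was arbitrary, condition $2')$ of Theorem \ref{stochasticincomp} holds and $G$ is stochastically incomplete.
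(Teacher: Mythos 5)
Your proof is correct and takes essentially the same route as the paper: the paper also constructs a radial, increasing, positive, $\lambda$-subharmonic function via exactly your recursion (written there as $v(r+1) = \bigl(1 - \frac{\lambda}{\un{m}_{+1}(r)}\bigr)v(r) + \frac{\ov{m}_{-1}(r)}{\un{m}_{+1}(r)}\bigl(v(r)-v(r-1)\bigr)$, which is your increment relation $\un{m}_{+1}(r)b_r = \ov{m}_{-1}(r)b_{r-1} + |\lambda| f(r)$), bounds it by the same infinite product $\prod_i \bigl(1 + \frac{\ov{m}_{-1}(i)-\lambda}{\un{m}_{+1}(i)}\bigr)$, and invokes condition $2')$ of Theorem \ref{stochasticincomp}. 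Your only addition is spelling out why the product converges (namely $\ov{m}_{-1}(r)\ge 1$ gives $\sum 1/\un{m}_{+1}(r) < \infty$), a detail the paper leaves implicit.
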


\begin{proof}
To prove the theorem we define a positive, bounded $\lambda$-subharmonic function depending only on the distance from $x_0$.  Let $v(0)= v(x_0) > 0$ be any positive constant.  For $x \in S_1(x_0)$ define
\begin{equation} \label{v(0)}
v(x)=v(1) = \left(1 - \frac{\lambda}{m(x_0)} \right) v(0)
\end{equation}
and, for $x \in S_{r+1}(x_0)$ where $r \geq 1$,
\begin{eqnarray}
\quad \quad v(x) = v(r+1) &=& \left( 1 + \frac{\ov{m}_{-1}(r) - \lambda}{\un{m}_{+1}(r)} \right) v(r) - \left( \frac{\ov{m}_{-1}(r)} {\un{m}_{+1}(r) } \right) v(r-1) \label{v(r+1)} \\
&=& \left( 1 - \frac{\lambda}{\un{m}_{+1}(r)} \right) v(r) + \left( \frac{\ov{m}_{-1}(r)} {\un{m}_{+1}(r)} \right) \big( v(r) - v(r-1) \big). \label{v(r+1)too}
\end{eqnarray}

From (\ref{v(0)}) it is clear that $v(1) > v(0)$ and, assuming $v(r) > v(r-1)$, it follows from (\ref{v(r+1)too}) that $v(r+1)>v(r)$.  Therefore, by induction, $v(r+1)>v(r)$ for all $r \geq0$ and $v$ is positive provided that $v(0)>0$.  Furthermore, using (\ref{v(r+1)}) repeatedly it follows that
\begin{eqnarray*}
v(r+1) &<&  \left( 1 + \frac{\ov{m}_{-1}(r) - \lambda}{\un{m}_{+1}(r)} \right) v(r) \\
&<& \prod_{i=0}^r  \left( 1 + \frac{\ov{m}_{-1}(i) - \lambda}{\un{m}_{+1}(i)} \right) v(0) < \prod_{i=0}^{\infty}  \left( 1 + \frac{\ov{m}_{-1}(i) - \lambda}{\un{m}_{+1}(i)} \right) v(0)
\end{eqnarray*}
which is finite from the assumption on $G$.  Therefore, $v$ is bounded.

It remains to show that $v$ is $\lambda$-subharmonic.  For $r=0$,
\begin{eqnarray*}
\Delta v(0) &=& m(x_0) \big( v(0) - v(1) \big) \\
&=& m(x_0) \left( v(0) - \left(1 - \frac{\lambda}{m(x_0)} \right) v(0) \right) = \lambda v(0).
\end{eqnarray*}
For $x \in S_r(x_0)$ where $r>0$, from (\ref{v(r+1)too}) and the fact that $v(r) - v(r-1)>0$, it follows that
\begin{eqnarray*}
\Delta v(x) &=& m_{+1}(x) \big( v(r) - v(r+1) \big) + m_{-1}(x) \big( v(r) - v(r-1) \big) \\
&=& \left( m_{+1}(x) \left( \frac{\lambda}{\un{m}_{+1}(r)} \right) \right) v(r)  \\
& & + \left( m_{-1}(x) - \ov{m}_{-1}(r) \left(\frac{ m_{+1}(x)}{\un{m}_{+1}(r) } \right) \right) \big( v(r) - v(r-1) \big) \\
&\leq&  \lambda v(r)
\end{eqnarray*}
thereby completing the proof.
\end{proof}

Theorem \ref{generalincompletegraphs} states that a graph will be stochastically incomplete if starting at a fixed vertex $x_0$ the number of edges leading away from $x_0$ is growing sufficiently rapidly in all directions from $x_0$.  The next result states that if we attach an arbitrary graph at $x_0$ to such a stochastically incomplete graph then the resulting graph will also be stochastically incomplete.  To make a precise statement we introduce some notation.  If $H$ is a subgraph of $G$ we let $H^C$ denote the complementary subgraph of $H$ in $G$.  Furthermore, we let, for $x_0 \in H$,
\[ \un{m}_{+1}^H (r) = \min_{\substack{x \in S_r(x_0) \\ x \in H}}  m_{+1}(x) \ \textrm{ and } \ \ov{m}_{-1}^H(r) = \max_{\substack{x \in S_r(x_0) \\ x \in H}}  m_{-1}(x) \]
so that the maximum and minimum are now taken over vertices in $H$.

\begin{theorem} \label{incompletedirection}
If $G = H \cup H^C$ is a graph with a vertex $x_0 \in H$ such that $\underline{m}_{\pm 1}^H(r)$ satisfy
\[  \sum_{r=1}^{\infty} \frac{\ov{m}_{-1}^H(r) }   {\underline{m}_{+1}^H(r)} < \infty \]
and $x_0$ is the only vertex of $H$ that has a neighbor in $H^C$ then  $G$ is stochastically incomplete.
\end{theorem}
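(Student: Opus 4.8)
The plan is to verify condition $2')$ of Theorem \ref{stochasticincomp}: it suffices to produce, for each $\lambda < 0$, a bounded, non-negative, non-zero function $v$ on $G$ with $\Delta v \leq \lambda v$. The idea is to run the radial construction of Theorem \ref{generalincompletegraphs} inside $H$, where the rapid branching lives, and to extend $v$ by zero across the single bottleneck vertex $x_0$ into $H^C$.

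First I would record a structural observation. Since $x_0$ is the only vertex of $H$ adjacent to $H^C$, every edge joining $H$ to $H^C$ has $x_0$ as its $H$-endpoint. Consequently any geodesic from a vertex $x \in H$ to $x_0$ stays inside $H$, so $r(x) = d(x,x_0)$ is computed within $H$, the set $S_r(x_0) \cap H$ coincides with the sphere of radius $r$ in $H$, and every neighbor of a vertex $x \in H \setminus \{x_0\}$ again lies in $H$. This guarantees that the sphere quantities appearing in the construction are exactly $\un{m}_{+1}^H(r)$ and $\ov{m}_{-1}^H(r)$, and (using $\ov{m}_{-1}^H(r) \geq 1$ for $r \geq 1$) that the summability hypothesis forces both $\sum 1/\un{m}_{+1}^H(r)$ and $\sum \ov{m}_{-1}^H(r)/\un{m}_{+1}^H(r)$ to converge. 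Next I would define $v$: set $v \equiv 0$ on $H^C$, put $v(x_0) = v(0) > 0$ arbitrary, and on $H$ let $v$ depend only on $r$ through the recursion (\ref{v(r+1)}) with the $H$-superscripted quantities, except that I would replace the initial step (\ref{v(0)}) by
\[ v(1) = \left(1 + \frac{m_{H^C}(x_0) - \lambda}{m_H(x_0)}\right) v(0), \]
where $m_H(x_0)$ and $m_{H^C}(x_0)$ count the neighbors of $x_0$ in $H$ and in $H^C$ respectively. The monotonicity $v(r+1) > v(r)$ and the boundedness estimate then go through verbatim as in Theorem \ref{generalincompletegraphs}, the boundedness following from convergence of $\prod_{i}(1 + (\ov{m}_{-1}^H(i) - \lambda)/\un{m}_{+1}^H(i))$; hence $v$ is positive on $H$ and zero on $H^C$, so it is non-negative, non-zero, and bounded.

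It then remains to check $\Delta v \leq \lambda v$ at every vertex, which I would split into three cases. For $x \in H \setminus \{x_0\}$ all neighbors lie in $H$, so the computation is identical to the one in Theorem \ref{generalincompletegraphs} and yields $\Delta v(x) \leq \lambda v(x)$. For $y \in H^C$ we have $v(y) = 0$ while every neighbor $z$ satisfies $v(z) \geq 0$, whence $\Delta v(y) = -\sum_{z \sim y} v(z) \leq 0 = \lambda v(y)$. The only delicate point — the step I expect to be the main obstacle — is the bottleneck vertex $x_0$, where the $m_{H^C}(x_0)$ edges into the zero region contribute a positive term to $\Delta v(x_0)$; this is exactly why a naive constant extension fails and why the initial value $v(1)$ must be enlarged relative to (\ref{v(0)}). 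With the choice above one computes $m_H(x_0)\big(v(0) - v(1)\big) = (\lambda - m_{H^C}(x_0))\,v(0)$, so that
\[ \Delta v(x_0) = m_H(x_0)\big(v(0) - v(1)\big) + m_{H^C}(x_0)\, v(0) = \lambda v(0), \]
giving equality at $x_0$. Having exhibited such a $v$ for every $\lambda < 0$, condition $2')$ of Theorem \ref{stochasticincomp} is satisfied, and therefore $G$ is stochastically incomplete.
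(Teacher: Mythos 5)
Your proof is correct, but it takes a genuinely different route from the paper's on the complement side. Both arguments run the radial recursion of Theorem \ref{generalincompletegraphs} inside $H$ and both absorb the edges from $x_0$ into $H^C$ by enlarging the first step: your formula for $v(1)$ is exactly what the paper's choice of $w(1)$ reduces to when the values on $H^C$ are set to zero (note $m_H(x_0)=m_{+1}^H(0)$). The difference is what happens on $H^C$. The paper invokes Lemma \ref{exception}, an exhaustion/maximum-principle construction producing a positive bounded function on $H^C\cup\{x_0\}$ that is $\lambda$-harmonic except at $x_0$, and then glues; you instead extend by zero, exploiting the fact that condition $2')$ of Theorem \ref{stochasticincomp} demands only a non-negative (not positive) bounded $\lambda$-subharmonic function, so that $\Delta v(y) = -\sum_{z \sim y} v(z) \leq 0 = \lambda v(y)$ holds trivially at every $y \in H^C$. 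Your route is more elementary and self-contained, making Lemma \ref{exception} unnecessary for this theorem. What the paper's route buys is the reusable Lemma \ref{exception} itself, together with a function that is positive on all of $G$ and genuinely $\lambda$-harmonic on $H^C\cup\{x_0\}$; as the Remark following the theorem indicates, that lemma is what permits the generalization to graphs attached at finitely many \emph{arbitrary} vertices of $H$, a situation where the zero-extension trick meshes less cleanly with the radial recursion because attachment points at positive radius break the radial symmetry of the needed correction. The verifications you defer to Theorem \ref{generalincompletegraphs} (monotonicity from $v(1)>v(0)$, boundedness using $\ov{m}_{-1}^H(r)\geq 1$, and subharmonicity for $x \in H\setminus\{x_0\}$) do go through, precisely because of your structural observation that every neighbor of a vertex of $H\setminus\{x_0\}$ lies in $H$ and that distances from vertices of $H$ to $x_0$ are realized by paths inside $H$.
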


\begin{proof}
The proof will use the following general statement which states that there always exists a positive, bounded function which is $\lambda$-harmonic except at one vertex.

\begin{lemma}\label{exception}
Let $G$ be a graph with $x_0 \in G$.  For every $\lambda<0$ there exists a function $v$ satisfying $v(x_0)=1$ with $0 < v(x) < 1$ and $\Delta v(x)=\lambda v(x)$ for $x \not=x_0$.
\end{lemma}

Assuming, for now, Lemma \ref{exception} we prove Theorem \ref{incompletedirection} by first defining a positive, bounded function $v$ which is $\lambda$-harmonic for all vertices $x \in H^C$ with $v(x_0)=w(0)=1$.  We then make $v$ $\lambda$-harmonic at $x_0$ by letting, for all vertices $x \in S_1(x_0) \subset H$,
\[ v(x) = w(1) = \left( \frac{1}{m_{+1}^H(0)} \right) \left( m(x_0) - \lambda - \sum_{\substack{x \sim x_0 \\ x \in H^C }} v(x) \right). \]
For $x \in S_{r+1}(x_0) \subset H$ let
\[ v(x) = w(r+1) = \left( 1 + \frac{\ov{m}_{-1}^H(r) - \lambda}{\un{m}_{+1}^H(r)} \right) w(r) - \left( \frac{\ov{m}_{-1}^H(r)} {\un{m}_{+1}^H(r) } \right) w(r-1)  \]
to define $v$, a positive, bounded, $\lambda$-subharmonic function depending on the distance from $x_0$, on the rest of $H$ as in the proof of Theorem \ref{generalincompletegraphs}.
\end{proof}

We now prove Lemma \ref{exception}:
\begin{proof}
Let $B_r=B_r(x_0)$ and observe that, for $\lambda < 0$, there exists a unique solution of the following system of equations:
\begin{equation}\label{system of equations}
\left\{ \begin{array}{ll} \Delta v_r(x) - \lambda v_r(x) = 0 & \textrm{ for } x \in \textrm{int } B_r \setminus \{x_0\} \\
v_r(x_0)=1 \\
v_r(x) = 0 & \textrm{ for } x \in \partial B_r.
\end{array} \right.
\end{equation}
To see this consider the associated homogeneous system obtained by replacing the second equation in (\ref{system of equations}) by $v_r(x_0)=0.$  We will show that the homogeneous system has only trivial solutions.  Let $w$ be a solution of the homogeneous system and suppose that $w$ is non-zero.  Then we can assume that $w(\hat{x})>0$ for some $\hat{x} \in $ int $B_r \setminus \{x_0\}$ and that $w(\hat{x})$ is a maximum for $w$.  It follows that $\Delta w(\hat{x})\geq 0$ while $\Delta w(\hat{x})=\lambda w(\hat{x})<0$.  The contradiction establishes the existence and uniqueness of a solution to (\ref{system of equations}).

One shows that $v_r$, the solution to (\ref{system of equations}), satisfies $0<v_r(x) < 1$ for $x \in $ int $B_r \setminus \{x_0\}$ by maximum principle arguments similar to the one used above.  Then, by extending each $v_r$ by 0 outside of $B_r$, it follows that $v_r \leq v_{r+1}$ so that $v_r \to v$ where $v$ satisfies $v(x_0)=1$, with $0<v(x)< 1$ and $\Delta v(x) = \lambda v(x)$ for all $x \not = x_0$.
\end{proof}

\begin{remark}
Lemma \ref{exception} can be generalized to obtain a function $v$ which is not $\lambda$-harmonic at finitely many vertices.  It can also be applied finitely many times to obtain a more general result then the one presented in Theorem \ref{incompletedirection}.  Specifically, one can attach to $H$ finitely many disjoint graphs at arbitrary vertices of $H$ and the resulting graph will be stochastically incomplete.  However, the stronger result that if $G$ contains a stochastically incomplete subgraph then $G$ is stochastically incomplete is not, in general, true.  For example, one can start with $T_n$, a stochastically incomplete model tree (see next subsection for the definition) and, by attaching infinitely many stochastically complete trees (models whose branching number is equal to 1, for example) at each vertex of $T_n$, construct a tree which contains $T_n$ and is stochastically complete.  See \cite{Woj07} for details.  In general, in an unpublished paper, M. Keller showed that any stochastically incomplete graph is a subgraph of a stochastically complete graph constructed in this manner but the number of attached graphs must be infinite.
\end{remark}

\subsection{Model Trees}
We will now show that the characterizations of stochastic completeness and incompleteness in terms of the growth of the valence on spheres given in Theorems \ref{generalgraphs} and \ref{generalincompletegraphs} are optimal by considering a specific family of trees.  Suppose that a tree contains a vertex $x_0$, referred to from now on as the \emph{root} for the tree, such that the valence at any other vertex only depends on the distance from $x_0$. Therefore, for all $x \in S_r(x_0)$, $m(x)=m(r)$.  We will let $n(0)=m(x_0)$ and, for $r>0$, $n(r)=m(r)-1$ denote the \emph{branching number} of such a tree by which we mean the number of edges on a sphere of radius $r$ about $x_0$ leading away from $x_0$.  We call such trees \emph{model} and denote them throughout by $T_n$.  As a consequence of previous results it follows that:

\begin{corollary} \label{models}
$T_n$ is stochastically complete if and only if
\[\sum_{r=0}^{\infty} \frac{1}{n(r)} = \infty. \]
\end{corollary}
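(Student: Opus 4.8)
The plan is to derive both directions of the equivalence by specializing the two general criteria—Theorem \ref{generalgraphs} for completeness and Theorem \ref{generalincompletegraphs} for incompleteness—to a model tree, where the sphere-valence quantities appearing in those theorems collapse to simple functions of the branching number $n(r)$. First I would record the relevant valences. Since every vertex of $S_r(x_0)$ has valence $m(r)$, the maximal sphere valence is $M(r) = m(r)$, so $M(0) = n(0)$ and $M(r) = n(r) + 1$ for $r \geq 1$. Because $T_n$ is a tree, each vertex in $S_r(x_0)$ with $r \geq 1$ has exactly one neighbor in the previous sphere (its unique predecessor), giving $m_{-1}(x) = 1$ and hence $\ov{m}_{-1}(r) = 1$; likewise each such vertex sends exactly $n(r)$ edges into $S_{r+1}(x_0)$, so $m_{+1}(x) = n(r)$ and $\un{m}_{+1}(r) = n(r)$.

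For the incompleteness direction, suppose $\sum_{r=0}^{\infty} 1/n(r) < \infty$. Using the valences just computed,
\[ \sum_{r=1}^{\infty} \frac{\ov{m}_{-1}(r)}{\un{m}_{+1}(r)} = \sum_{r=1}^{\infty} \frac{1}{n(r)} < \infty, \]
the restriction to $r \geq 1$ being harmless since the omitted $r=0$ term $1/n(0)$ is finite. Theorem \ref{generalincompletegraphs} then applies directly and $T_n$ is stochastically incomplete.

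For the completeness direction, suppose $\sum_{r=0}^{\infty} 1/n(r) = \infty$. The one point requiring care is that Theorem \ref{generalgraphs} is phrased via $M(r) = n(r) + 1$ rather than $n(r)$, so I would check that $\sum 1/(n(r)+1)$ and $\sum 1/n(r)$ share the same convergence behavior. Since the tree is infinite, each sphere forces at least one forward edge, so $n(r) \geq 1$ for all $r$, whence $\frac{1}{2n(r)} \leq \frac{1}{n(r)+1} \leq \frac{1}{n(r)}$ and therefore $\sum_{r=0}^{\infty} 1/M(r) = \infty$. Theorem \ref{generalgraphs} then gives stochastic completeness, and combining the two implications yields the stated equivalence.

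The argument presents no genuine obstacle, since it is essentially a matter of verifying that the general sphere-valence hypotheses reduce to the single condition on $n(r)$. The only substantive step is the elementary comparison showing that $\sum 1/(n(r)+1)$ diverges exactly when $\sum 1/n(r)$ does; this is what guarantees that the completeness and incompleteness criteria meet precisely at the threshold $\sum_{r=0}^{\infty} 1/n(r) = \infty$, so that there is no gap between them.
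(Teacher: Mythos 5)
Your proof is correct and follows essentially the same route as the paper: both directions come from specializing Theorems \ref{generalgraphs} and \ref{generalincompletegraphs} to the model tree via $\un{m}_{+1}(r) = n(r) = M(r)-1$ and $\ov{m}_{-1}(r) = 1$. Your explicit verification that $\sum 1/(n(r)+1)$ diverges exactly when $\sum 1/n(r)$ does (using $n(r) \geq 1$) is a detail the paper leaves implicit, but it is the same argument.
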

\begin{proof}
Since, for a model tree $T_n$, $n(r)= \underline{m}_{+1}(r) = M(r) - 1$ and $\ov{m}_{-1}(r)=1$ for all $r>0$, Corollary \ref{models} follows by applying Theorems \ref{generalgraphs} and \ref{generalincompletegraphs}.
\end{proof}

\subsection{Heat Kernel Comparison}
In this subsection we will prove two inequalities comparing the heat kernel on a model tree and the heat kernel on a general graph.  For the Riemannian case see \cite{CheYau81}*{Theorem 3.1}.  Throughout, we will denote the heat kernel on the model tree by $\rho_t(x,y)$ to distinguish it from $p_t(x,y)$, the heat kernel on a general graph.  In order to prove our results we will need the following two lemmas concerning the heat kernel on model trees.

\begin{lemma}
Let $T_n$ be a model tree with root vertex $x_0$ and heat kernel $\rho_t(x,y)$.  Then, for all vertices $x \in S_r(x_0)$,
\[ \rho_t(x_0,x) = \rho_t(r). \]
\end{lemma}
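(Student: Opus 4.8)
The plan is to exploit the radial symmetry of the model tree. Since the valence at every vertex of $T_n$ depends only on its distance from the root $x_0$, the tree possesses enough automorphisms fixing $x_0$ to carry any vertex of $S_r(x_0)$ to any other, and the heat kernel is invariant under such automorphisms. So the two ingredients are: (i) the group of automorphisms of $T_n$ fixing $x_0$ acts transitively on each sphere $S_r(x_0)$, and (ii) the heat kernel respects graph automorphisms. Together these give $\rho_t(x_0,x) = \rho_t(x_0,x')$ whenever $d(x_0,x) = d(x_0,x') = r$, which is exactly the assertion.

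For (i), given $x, x' \in S_r(x_0)$ with geodesics $x_0 = y_0, y_1, \ldots, y_r = x$ and $x_0 = y_0, y_1', \ldots, y_r' = x'$ to the root, I would construct an automorphism $\sigma$ with $\sigma(x_0)=x_0$ and $\sigma(y_i) = y_i'$ by induction along the geodesic. At each $y_i$ the neighbors leading away from the root number $n(i)$ for both $y_i$ and $y_i'$, so I send the geodesic child $y_{i+1}$ to $y_{i+1}'$, pair the remaining children by any bijection, and extend across the hanging subtrees using the rooted isomorphism that exists because every subtree rooted at distance $i+1$ carries the identical branching sequence $n(i+1), n(i+2), \ldots$. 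This yields a genuine graph automorphism with $\sigma(x) = x'$.

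For (ii), any automorphism $\sigma$ fixing $x_0$ preserves distance to $x_0$, hence maps each ball $B_r(x_0)$ to itself and carries $\textup{int } B_r$ and $\partial B_r$ to themselves. Since $\Delta$ commutes with pullback by automorphisms, that is, $\Delta(f\circ\sigma) = (\Delta f)\circ\sigma$ (a one-line check from (\ref{Laplacian}), using that $z \sim x \iff \sigma z \sim \sigma x$), the same holds for the reduced Laplacian $\Delta_r$ on $C(B_r,\partial B_r)$. Therefore $e^{-t\Delta_r}$ commutes with $\sigma^*$, and a short computation with $\delta_{\sigma x} = \delta_x \circ \sigma^{-1}$ shows its kernel satisfies $\rho_t^r(\sigma x, \sigma y) = \rho_t^r(x,y)$. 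Letting $r \to \infty$ in the defining limit $\rho_t = \lim_r \rho_t^r$ (and invoking independence of the exhaustion, Property 7) of Theorem \ref{heatkernel}) gives the same identity for $\rho_t$. Applying it with $\sigma(x_0) = x_0$ and $\sigma(x) = x'$ produces $\rho_t(x_0,x) = \rho_t(x_0,x')$, so $\rho_t(x_0,x)$ depends only on $r$.

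I expect the main obstacle to be the careful construction in step (i): verifying that the partial map prescribed along the two geodesics extends to a bona fide automorphism of the entire tree. This is precisely the point at which the defining \emph{model} property — that the valence is a function of distance alone — is used, since it is what makes the hanging subtrees at a common distance isomorphic. By contrast, step (ii) is routine operator algebra, and the final synthesis is immediate once transitivity and invariance are in hand.
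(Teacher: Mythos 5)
Your proof is correct, but it takes a genuinely different route from the paper's. The paper argues computationally: it writes the Dirichlet kernel as the power series $\rho_t^R(x_0,x) = \delta_{x_0}(x) - t\Delta_R(x_0,x) + \frac{t^2}{2}\Delta_R^2(x_0,x) - \cdots$ (Definition \ref{Dirichletheatkernelsdefinition}), notes that $\Delta_R(x_0,\cdot)$ is radial (it equals $n(0)$ at $x_0$, $-1$ on $S_1(x_0)$, and $0$ elsewhere), and uses the composition rule $\Delta_R^{m+n}(x_0,x) = \sum_{y \in B_R}\Delta_R^m(x_0,y)\Delta_R^n(y,x)$ to propagate radiality to all powers by induction --- the model property enters because every vertex of $S_r(x_0)$ has exactly $n(r)$ neighbors in $S_{r+1}(x_0)$ and one neighbor in $S_{r-1}(x_0)$, so $\Delta_R$ takes radial functions to radial functions; radiality of $\rho_t^R$ then passes to the limit $R \to \infty$. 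You instead establish transitivity of the root-fixing automorphism group on each sphere and invariance of the heat kernel under automorphisms. Both arguments invoke the model hypothesis at exactly one point: for you, in the rooted isomorphism of hanging subtrees that extends the partial map along the two geodesics to a global automorphism; for the paper, in the induction that keeps $\Delta_R^k(x_0,\cdot)$ radial. Your approach buys a conceptual statement valid for any graph whose root-fixing automorphism group is transitive on spheres; the paper's buys brevity (the power series is already on the table) and works under the weaker hypothesis that the Laplacian preserves radial functions, which matters for non-tree ``model'' graphs with cross edges on spheres, where the relevant edge counts can be radial even though no automorphism carries one vertex of a sphere to another. One simplification available in your step (ii): since $\sigma$ fixes $x_0$, it maps each ball $B_R(x_0)$ of the exhaustion to itself, so invariance of $\rho_t$ follows from invariance of each $\rho_t^R$ alone; Property 7) of Theorem \ref{heatkernel} is not needed.
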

\begin{proof}
This is clear from the fact that the Dirichlet heat kernels on $B_R(x_0)$ are defined by $\rho_t^R(x_0,x)= \la e^{-t\Delta_R} \delta_{x_0}, \delta_{x} \ra = \delta_{x_0}(x) - t \Delta_R(x_0,x) + \frac{t^2}{2} \Delta_R^2(x_0,x) - \dots$ where
\[ \Delta_R(x_0,x) = \Delta_R \delta_{x_0}(x) = \left\{  \begin{array}{lll}
 n(0) & \textrm{ if } x = x_0 \\
 - 1 & \textrm{ if } x \in S_1(x_0) \\
 0  & \textrm{ otherwise }
 \end{array} \right. \]
and $\Delta_R^{m+n}(x_0,x) = \sum_{y \in B_R} \Delta_R^m(x_0,y) \Delta_R^n(y,x).$
\end{proof}

\begin{lemma}\label{modelheatkernels}
Let $T_n$ be a model tree with root vertex $x_0$ and heat kernel $  \rho_t(r) = \rho_t(x_0,x)$ for $x \in S_r(x_0)$.  Then, for $r \geq 0$ and $t\geq0$,
\[\rho_t(r)\geq \rho_t(r+1). \]

\end{lemma}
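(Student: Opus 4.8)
The plan is to prove the inequality first for the Dirichlet heat kernels $\rho_t^R$ on the balls $B_R=B_R(x_0)$ used to construct $\rho_t$, and then let $R\to\infty$. Since $\rho_t^R(x_0,\cdot)$ is radial, I write $\rho_t^R(r)$ for its common value on $S_r(x_0)$ and let $N(r)=|S_r(x_0)|$ denote the number of vertices in the $r$-th sphere, so that $N(0)=1$ and $N(r+1)=n(r)N(r)$. The bare difference $\rho_t^R(r)-\rho_t^R(r+1)$ does not satisfy an equation to which a maximum principle applies, so instead I would work with the weighted difference (the radial \emph{flux}) $J_r(t)=N(r+1)\big(\rho_t^R(r)-\rho_t^R(r+1)\big)$ and aim to show $J_r(t)\ge 0$ for all $0\le r\le R-1$ and $t\ge0$; dividing by $N(r+1)>0$ and letting $R\to\infty$ then yields the lemma.

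The main computation is to record the evolution of the flux. Starting from the radial form of the heat equation, namely $\frac{d}{dt}\rho_t^R(r)=n(r)\big(\rho_t^R(r+1)-\rho_t^R(r)\big)+\big(\rho_t^R(r-1)-\rho_t^R(r)\big)$ for $r\ge1$ and $\frac{d}{dt}\rho_t^R(0)=n(0)\big(\rho_t^R(1)-\rho_t^R(0)\big)$, together with the identity $N(r+1)=n(r)N(r)$, a direct calculation gives
\[ \frac{d}{dt}J_r = J_{r+1} + n(r)\,J_{r-1} - \big(n(r)+1\big)J_r \]
for $1\le r\le R-2$, and $\frac{d}{dt}J_0 = J_1-\big(n(0)+1\big)J_0$ (equivalently, adopt the convention $J_{-1}:=0$). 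The purpose of the weighting is precisely that in this equation the two off-diagonal coefficients $1$ and $n(r)$ are nonnegative and sum to the diagonal coefficient $n(r)+1$, so that $J$ solves a genuine heat-type equation on the finite path $0,1,\dots,R-1$, with a nonnegative killing term at $r=0$. The data are nonnegative: at $t=0$ one has $J_r(0)=N(1)\,\delta_{0r}\ge0$ because $\rho_0^R=\delta_{x_0}$, and on the right boundary $J_{R-1}(t)=N(R)\,\rho_t^R(R-1)\ge0$ since $\rho_t^R(R)=0$ while the Dirichlet kernel is nonnegative.

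It then remains to show $J\ge0$ by a minimum principle that parallels Lemma \ref{maxheatlem1}. Writing $\mathcal{L}J_r=(n(r)+1)J_r-J_{r+1}-n(r)J_{r-1}$, so that $\frac{d}{dt}J=-\mathcal{L}J$, the operator $\mathcal{L}$ shares the sign structure of $\Delta$: at an interior spatial maximum of a function one has $\mathcal{L}\ge0$, and $\mathcal{L}\mathbf{1}\ge0$ (vanishing for $r\ge1$ and equal to $n(0)$ at $r=0$). Hence the argument in the proof of Lemma \ref{maxheatlem1}, applied to $-J$ with the perturbation $-J-\epsilon t$, shows that $-J$ attains its maximum on the parabolic boundary $\big(\{0,\dots,R-1\}\times\{0\}\big)\cup\big(\{R-1\}\times[0,T]\big)$, where $-J\le0$; letting $\epsilon\to0$ gives $J\ge0$ on $B_R$, and $R\to\infty$ completes the proof.

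I expect the only real obstacle to be the first step: recognizing that one must pass to the flux $J_r=N(r+1)\big(\rho_t^R(r)-\rho_t^R(r+1)\big)$ rather than the unweighted difference. A naive maximum-principle attempt on $\rho_t^R(r)-\rho_t^R(r+1)$ fails because the resulting equation is asymmetric and its diagonal coefficient need not dominate the off-diagonal ones when the branching number $n(r)$ is increasing. Multiplying by the sphere size $N(r+1)$ is exactly what symmetrizes the equation into the maximum-principle-respecting form above; once the flux equation is in hand, the remaining estimates are routine and mirror the maximum principles already established in Lemmas \ref{maxheatlem1} and \ref{maxheatlem2}.
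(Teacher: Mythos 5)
Your proposal is correct, but it takes a genuinely different route from the paper's proof. The paper works with the unweighted kernel values: it first establishes the two endpoint facts $\rho_t^R(0)>\rho_t^R(1)$ (via the eigenfunction expansion, statement 5) of Proposition \ref{Dirichletheatkernels}, since the $\phi_i^R$ cannot all vanish at the root) and $\rho_t^R(R-1)>\rho_t^R(R)=0$, and then proves full radial monotonicity by contradiction applied to $\varphi(t)=\min_{i<j}\big(\rho_t^R(i)-\rho_t^R(j)\big)$: if $\varphi(t_0)\leq0$, one may take a global interior minimum at $i_0$ and maximum at $j_0$ with $i_0<j_0$, the heat equation then forces $\varphi$ to have been strictly smaller just before $t_0$, and a maximal-interval continuity argument pushes this back to contradict $\varphi(0)=0$. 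Your flux substitution $J_r=N(r+1)\big(\rho_t^R(r)-\rho_t^R(r+1)\big)$ replaces all of this with one computation plus the parabolic minimum principle: the equation $\frac{d}{dt}J_r=J_{r+1}+n(r)J_{r-1}-(n(r)+1)J_r$ (which I have checked, using $N(r+1)=n(r)N(r)$) has nonnegative off-diagonal coefficients, zero row sums for $r\geq1$, and a nonnegative killing term at $r=0$, so nonnegativity propagates from the nonnegative initial and right-boundary data exactly as in Lemma \ref{maxheatlem1}; your diagnosis of why the unweighted difference fails (negative killing when $n(r)$ increases) is also accurate and explains why the paper needed its more ad hoc device. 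What each buys: the paper's argument yields strict inequalities $\rho_t^R(i)>\rho_t^R(j)$ for $i<j$, $t>0$, on each finite ball (only the weak inequality survives the limit $R\to\infty$, and only that is claimed), while yours is shorter, reuses machinery already established in the paper, and exposes the structural reason for monotonicity --- the radial part of the kernel is governed by a weighted half-line heat equation whose flux is itself a supersolution-type quantity. One point you should state more carefully: your assertion that $\mathcal{L}\geq0$ at an interior spatial maximum is not unconditional at $r=0$, where $\mathcal{L}w_0=(w_0-w_1)+n(0)w_0$ requires $w_0\geq0$; because of the killing term the maximum principle controls only the \emph{positive} part of the maximum of $-J-\epsilon t$, but this suffices for your purposes since the parabolic boundary data for $-J$ are nonpositive.
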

\begin{proof}
The statement is clearly true for $t=0$.  Consider $\rho_t^R(r)$ on $B_R(x_0) \times [0,T]$ as above.  We claim that $\rho_t^R(r) > \rho_t^R(r+1)$ for all $r\geq0$ and $t > 0$.  For the case of $r=0$ observe that the eigenfunction expansion of $\rho_t^R(r)$, statement 5) in Proposition \ref{Dirichletheatkernels}, implies that $\pdt \rho_t^R(0) < 0$ since $\{ \phi_i^R(x) \}_{i=0}^{k(R)} $ forms an orthonormal basis for $C(B_R, \partial B_R)$ and, as such, $\phi_i^R(x_0)$ cannot be zero for all $i$.  Therefore,
\[ \Delta \rho_t^R(0) = n(0) \big( \rho_t^R(0) - \rho_t^R(1) \big) > 0 \]
or $\rho_t^R(0) > \rho_t^R(1).$  Also, $\rho_t^R(R-1) > \rho_t^R(R)$ since $\rho_t^R(R-1)>0$ while $\rho_t^R(R)=0$ for $t>0$.

Consider now the function
\[ \varphi(t) = \min_{i<j} \big( \rho_t^R(i) - \rho_t^R(j) \big). \]
The claim is that $\varphi(t) > 0$ for $t > 0$.  Suppose not, that is, suppose that there exists $t_0 > 0$ such that $\varphi(t_0) \leq 0$.  Therefore, there exist $i_0 < j_0$ such that $\rho_{t_0}^R(i_0) \leq \rho_{t_0}^R(j_0)$.  We can then assume that $\rho_{t_0}^R(r)$ has a global minimum at $r=i_0$ and a global maximum at $r=j_0$ for  $0< i < j < R$.  Then, from $\rho_t^R(0)>\rho_t^R(1)$ and $\rho_t^R(R-1) > \rho_t^R(R)$ shown above, we may assume that $\rho_{t_0}^R(i_0 -1)> \rho_{t_0}^R(i_0)$ and $\rho_{t_0}^R(j_0) > \rho_{t_0}^R(j_0+1)$ implying that
\[ \Delta \rho_{t_0}^R(i_0) < 0 \textrm{ and } \Delta \rho_{t_0}^R(j_0) > 0. \]
Therefore,
\[ \pdt \rho_{t_0}^R(i_0) > 0 \textrm{ and } \pdt \rho_{t_0}^R(j_0) < 0 \]
which implies that $\varphi'(t_0) > 0$.  It follows that there exists an $\epsilon >0$ such that for all $t \in (t_0 - \epsilon, t_0]$, $\varphi(t) < \varphi(t_0) \leq 0$.

Let $I$ denote the maximal interval contained in $[0,t_0]$ which contains $t_0$ and all $t< t_0$ for which $\varphi(t) \leq 0$.  It is clear from the continuity of $\varphi$ that $I$ is closed.  If $I = [a, t_0]$ for some $a>0$ then the argument above implies that there exists an $\epsilon>0$ such that all $t \in (a-\epsilon, a]$  are in $I$ contradicting the maximality of $I$.  If $I=[0,t_0]$ then it would follow that $\varphi(0)<0$ contradicting the fact that $\varphi(0)=0$.  In either case, we obtain a contradiction implying that $\varphi(t)>0$ for all $t>0$ and, therefore, that $\rho_t^R(r) > \rho_t^R(r+1)$.   Letting $R \to \infty$ completes the proof of the lemma.
\end{proof}

We are now ready to state and prove the following theorem
\begin{theorem} \label{comparisons}
Let $T_n$ denote a model tree with root vertex $x_0$ and heat kernel $\rho_t(r)=\rho_t(x_0,x)$ for $x \in S_r(x_0)$.  Let $G$ denote a graph with heat kernel $p_t(x,y)$.
\begin{enumerate}
\item[1)] If $G$ contains a vertex $x_0'$ such that, for all $x \in S_r(x_0') \subset G$, \\ $m_{+1}(x) \leq n(r)$ then
\[ \rho_t(r) \leq p_t(x_0',x) \ \textrm{ for all } x \in S_r(x_0') \subset G. \]
\item[2)] If $G$ contains a vertex $x_0'$ such that, for all $x \in S_r(x_0') \subset G$, \\ $n(r) \leq {m}_{+1}(x)$  and $m_{-1}(x) = 1$ then
\[ p_t(x_0',x) \leq \rho_t(r) \ \textrm{ for all } x \in S_r(x_0') \subset G. \]
\end{enumerate}
\end{theorem}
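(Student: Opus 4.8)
The plan is to transplant the radial heat kernel of the model tree onto $G$ through the distance function $r(x) = d(x, x_0')$ and to show that the transplanted function is a super- or sub-solution of the heat equation on $G$, after which both inequalities follow from the maximum principle of Lemma \ref{maxheatlem1}. I would work in the finite approximation: on each ball $B_R = B_R(x_0')$ set $f^R(x,t) = \rho_t^R(r(x))$, where $\rho_t^R$ is the Dirichlet heat kernel on $B_R(x_0) \subset T_n$ (writing $R$ for the exhaustion radius to keep it distinct from $r = r(x)$). Both $f^R$ and $p_t^R(x_0', \cdot)$ vanish on $\partial B_R$ and equal $\delta_{x_0'}$ at $t=0$, so their difference $h^R = f^R - p_t^R(x_0', \cdot)$ vanishes on the parabolic boundary $(B_R \times \{0\}) \cup (\partial B_R \times [0,T])$.

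The key computation is $\Delta f^R + \pdt f^R$ on the interior of $B_R$. Splitting the neighbors of $x \in S_r(x_0')$ into those in $S_{r \pm 1}(x_0')$ and those in $S_r(x_0')$ --- the latter contributing nothing since $f^R$ is radial --- gives $\Delta f^R(x,t) = m_{+1}(x)\big(\rho_t^R(r) - \rho_t^R(r+1)\big) + m_{-1}(x)\big(\rho_t^R(r) - \rho_t^R(r-1)\big)$. On the model tree itself $m_{+1} \equiv n(r)$ and $m_{-1} \equiv 1$ for $r > 0$, so $\pdt \rho_t^R(r) = -n(r)\big(\rho_t^R(r) - \rho_t^R(r+1)\big) - \big(\rho_t^R(r) - \rho_t^R(r-1)\big)$; subtracting yields
\[ \Delta f^R + \pdt f^R = \big(m_{+1}(x) - n(r)\big)\big(\rho_t^R(r) - \rho_t^R(r+1)\big) + \big(m_{-1}(x) - 1\big)\big(\rho_t^R(r) - \rho_t^R(r-1)\big), \]
with the analogous identity $\Delta f^R(x_0') + \pdt f^R(x_0') = \big(m_{+1}(x_0') - n(0)\big)\big(\rho_t^R(0) - \rho_t^R(1)\big)$ at the root. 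The crucial input is the monotonicity of Lemma \ref{modelheatkernels}, which in its Dirichlet form gives $\rho_t^R(r) - \rho_t^R(r+1) \geq 0$ and hence $\rho_t^R(r) - \rho_t^R(r-1) \leq 0$.

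Under the hypothesis of part 1), $m_{+1}(x) - n(r) \leq 0$ while $m_{-1}(x) - 1 \geq 0$ (every vertex at distance $r>0$ has a neighbor nearer to $x_0'$), so both terms are $\leq 0$ and $\Delta h^R + \pdt h^R \leq 0$; Lemma \ref{maxheatlem1} then forces the maximum of $h^R$ onto the parabolic boundary, where $h^R = 0$, giving $f^R \leq p_t^R(x_0', \cdot)$. Under the hypothesis of part 2), $m_{-1}(x) - 1 = 0$ annihilates the second term while $m_{+1}(x) - n(r) \geq 0$ makes the first $\geq 0$, so $\Delta h^R + \pdt h^R \geq 0$ and the minimum principle of Lemma \ref{maxheatlem1} gives $f^R \geq p_t^R(x_0', \cdot)$. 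In either case, letting $R \to \infty$ and using $\rho_t^R \to \rho_t$ and $p_t^R \to p_t$ produces the stated inequalities.

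The main technical point to watch is the behaviour on the outer sphere of the finite approximation. For part 2), the hypothesis $n(r) \leq m_{+1}(x)$ (with $n(r) \geq 1$ for an infinite model) guarantees that every distance-$R$ vertex has a forward neighbor outside $B_R$, so $\textup{int } B_R = \{r < R\}$ and the sub-solution inequality holds throughout; for part 1) one checks directly that at an interior vertex at distance $R$ (where $m_{+1} = 0$ and $\rho_t^R(R) = 0$) the super-solution inequality persists. Beyond this bookkeeping the argument is a clean maximum-principle comparison, so I expect the sign analysis of the transplanted Laplacian --- correctly invoking Lemma \ref{modelheatkernels} to pin down the signs of the two bracketed differences --- to be the only genuinely delicate step.
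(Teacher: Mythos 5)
Your proposal is correct and follows essentially the same route as the paper: transplant the radial Dirichlet kernel $\rho_t^R$ onto $G$ via the distance function, compute $\Delta f^R + \pdt f^R$ using the sphere decomposition of neighbors, invoke the monotonicity $\rho_t^R(r) \geq \rho_t^R(r+1)$ of Lemma \ref{modelheatkernels} to fix the signs, compare with $p_t^R(x_0',\cdot)$ via Lemma \ref{maxheatlem1}, and let $R \to \infty$. Your unified sign identity and the explicit check at interior vertices of distance $R$ are slightly more careful bookkeeping than the paper, which writes out only part 1) as an inequality and notes that part 2) uses the same techniques, but the substance is identical.
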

\begin{proof}
For both 1) and 2) we think of the heat kernel $\rho_t(x) = \rho_t(x_0,x)$ on $T_n$ as being defined for $x \in G$ by letting $\rho_t(x) = \rho_t(r)$ if $x \in S_r(x_0') \subset G$.  We let $\Delta_G$ and $\Delta_{T_n}$ denote the Laplacians on $G$ and $T_n$, respectively.  For 1) it follows from Lemma \ref{modelheatkernels} and from the assumption $m_{+1}(x) \leq n(r)$ that, for $x \in S_r(x_0') \subset G$, the Dirichlet heat kernel $\rho_t^R(x)$ on $B_R(x_0) \times [0,T]$ in $G \times [0,T]$ satisfies
\begin{eqnarray*}
\Delta_G \rho_t^R(x) &=& m_{+1}(x) \big( \rho_t^R(r)-\rho_t^R(r+1) \big) + m_{-1}(x) \big( \rho_t^R(r)-\rho_t^R(r-1) \big) \\
&\leq& n(r) \big(\rho_t^R(r)-\rho_t^R(r+1) \big) + \big( \rho_t^R(r) - \rho_t^R(r-1) \big) \\
&=& \Delta_{T_n} \rho_t^R(r)=-\pdt \rho_t^R(r).
\end{eqnarray*}
Letting $u^R(x,t)=p_t^R(x_0',x)- \rho_t^R(x)$ it follows that $\Delta_G u^R(x,t) + \pdt u^R(x,t) \geq 0$ on int $B_R(x_0)\times [0,T]$.  Lemma \ref{maxheatlem1} implies that
\[ \min_{B_R(x_0)\times [0,T]} u^R(x,t) = \min_{\substack{B_R(x_0) \times \{0\} \ \cup \\ \partial B_R(x_0) \times [0,T]}} u^R(x,t) = 0.\]
Therefore, $p_t^R(x_0',x) \geq \rho_t^R(r)$ for all $x \in S_r(x_0') \subset G$ and $p_t(x_0',x) \geq \rho_t(r)$ by letting $R \to \infty$.  The second statement is proved using the same techniques.
\end{proof}

\begin{remark}
The first result in Theorem \ref{comparisons} is an exact analogue of Theorem 3.1 in \cite{CheYau81}.  For the second result, the additional assumption that $m_{-1}(x)=1$ for all vertices $x$ implies that $G$ is obtained by starting with a tree and then allowing any two vertices on a sphere $S_r(x_0')$ to be connected by an edge.  In particular, for every vertex in such a graph there exists a unique shortest path connecting that vertex to $x_0'$.
\end{remark}

\section{Essential Spectrum}
\subsection{Bottom of the Spectrum}
We begin by recalling a characterization of the bottom of the spectrum of the Laplacian in terms of positive $\lambda$-harmonic functions.  Fix a vertex $x_0 \in G$ and let, as before, $B_r=B_r(x_0)$ with $C(B_r, \partial B_r)$ denoting those functions on $B_r$ which vanish on the boundary.   It can be shown \cite{Dod84}*{Lemma 1.9} that the smallest eigenvalue of the reduced
Laplacian $\Delta_r$ acting on $C(B_r,\partial B_r)$ is a simple eigenvalue given by
\[ \lambda_0(\Delta_r) = \min_{f \in C(B_r, \partial B_r) \setminus \{0\}} \frac{\la df, df \ra}{\la f,f \ra}. \]
Therefore, $\lambda_0(\Delta_r) \geq \lambda_0(\Delta_{r+1})$ and the bottom of the spectrum of the Laplacian is defined as the limit $\lambda_0(\Delta)=\lim_{r \to \infty}\lambda_0(\Delta_r)$.  One could also let $\lambda_0(\Delta)=\inf_{f \in C_0(V) \setminus \{0\}} \frac{\la \Delta f, f \ra}{\la f,f \ra}.$  We now state the following theorem which, in the Riemannian setting, is given in \cite{Sul86}*{Theorem 2.1} (see also \cite{FisSch80}*{Lemma 1}) and, in the case of the combinatorial Laplacian, in \cite{DodKar88}*{Proposition 1.5}.
\begin{theorem}
For every $\lambda \leq \lambda_0(\Delta)$ there exists a positive function $v$ satisfying $\Delta v = \lambda v$.  For every $\lambda > \lambda_0(\Delta)$ no such functions exist.
\end{theorem}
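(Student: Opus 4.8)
The plan is to prove the two directions separately. For the first direction, fix $\lambda \le \lambda_0(\Delta)$; I want to produce a positive $\lambda$-harmonic function. The natural candidates are the first eigenfunctions $\phi_0^r$ of the reduced Laplacian $\Delta_r$ on $B_r$. Since $\lambda_0(\Delta_r)$ is a \emph{simple} eigenvalue (by the cited result $\cite{Dod84}$), the associated eigenfunction $\phi_0^r$ does not change sign on $\textrm{int } B_r$; indeed, if it did, replacing it by $|\phi_0^r|$ would not increase the Rayleigh quotient $\frac{\la df, df \ra}{\la f,f \ra}$, and by simplicity $|\phi_0^r|$ must then be a multiple of $\phi_0^r$, forcing the original to have constant sign. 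So I may normalize $\phi_0^r > 0$ on $\textrm{int } B_r$, say with $\phi_0^r(x_0) = 1$. These functions satisfy $\Delta \phi_0^r = \lambda_0(\Delta_r) \phi_0^r$ on $\textrm{int } B_r$.

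First I would pass to a limit. The functions $\phi_0^r$ are positive and normalized at $x_0$, and they satisfy a pointwise eigenvalue equation; by a Harnack-type estimate at each vertex (the value at $x$ is controlled by the values at neighbors via $\Delta \phi_0^r(x) = \lambda_0(\Delta_r)\phi_0^r(x)$, i.e. $\sum_{y\sim x}\phi_0^r(y) = (m(x) - \lambda_0(\Delta_r))\phi_0^r(x)$) one obtains, by induction on the distance from $x_0$, uniform bounds on $\phi_0^r(x)$ over $r$ for each fixed $x$. A diagonal extraction then yields a subsequence converging pointwise to a function $\phi \ge 0$ with $\phi(x_0) = 1$, hence $\phi \not\equiv 0$. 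Since $\lambda_0(\Delta_r) \to \lambda_0(\Delta)$ and the Laplacian is local, $\phi$ satisfies $\Delta \phi = \lambda_0(\Delta)\phi$ on all of $V$; and $\phi > 0$ everywhere follows from the eigenvalue equation and connectedness, exactly as in the maximum-principle arguments already used in Lemma \ref{maxheatlem2}. This handles $\lambda = \lambda_0(\Delta)$. For $\lambda < \lambda_0(\Delta)$ one argues similarly but more easily: I would solve the Dirichlet problem $\Delta v_r = \lambda v_r$ on $\textrm{int } B_r$ with $v_r = 1$ on $\partial B_r$ — unique solvability and positivity follow from a maximum-principle argument identical to the one in the proof of Theorem \ref{stochasticincomp} — and pass to the limit to get a positive $v$ with $\Delta v = \lambda v$.

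For the converse direction, suppose $\lambda > \lambda_0(\Delta)$ and that a positive $v$ with $\Delta v = \lambda v$ exists; I want a contradiction. The idea is the standard Fischer-Colbrie--Schoen / ground-state substitution: given positive $v$, for any $f \in C_0(V)$ I would write $f = v g$ and compute $\la \Delta f, f \ra$ in terms of $v$, $g$, and $\lambda$. Using Green's theorem (Lemma \ref{Green's}, in the form $\la \Delta f, f \ra = \la df, df \ra$ for finitely supported $f$) together with the pointwise identity $\Delta v = \lambda v$, one derives a lower bound of the form $\la \Delta f, f \ra \ge \lambda \la f, f \ra$ for all $f \in C_0(V)$, which forces $\lambda_0(\Delta) = \inf_{f\in C_0(V)\setminus\{0\}} \frac{\la \Delta f,f\ra}{\la f,f\ra} \ge \lambda$, contradicting $\lambda > \lambda_0(\Delta)$.

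The main obstacle I anticipate is the discrete ground-state transformation in the converse direction: the clean Riemannian identity relies on a product rule $\Delta(vg) = g\,\Delta v + v\,\Delta g - 2\,\nabla v\cdot\nabla g$ that has no exact analogue on graphs. The correct discrete substitute involves the mixed term $\sum_{x\sim y} (v(x)-v(y))\big(g(x)^2 - g(y)^2\big)$-type expressions, and one must expand $\la d(vg), d(vg)\ra$ carefully and recombine it with $\lambda \sum v^2 g^2$, keeping track of signs so that the cross terms assemble into a manifestly nonnegative quantity. Getting this combinatorial identity to close — rather than leaving an uncontrolled remainder — is the delicate step; the positivity of $v$ is exactly what makes the recombination work.
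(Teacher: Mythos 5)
Your proposal is correct, and while its first half tracks the paper, its second half takes a genuinely different route. For $\lambda \le \lambda_0(\Delta)$ the paper does exactly what you describe for the subcritical case, and does it uniformly for \emph{all} $\lambda \le \lambda_0(\Delta)$: solve $\Delta v_r = \lambda v_r$ on $\textrm{int } B_r$ with $v_r \equiv 1$ on $\partial B_r$, normalize $w_r = v_r/v_r(x_0)$, and extract a pointwise limit by a Harnack bound plus diagonalization; your separate treatment of $\lambda = \lambda_0(\Delta)$ via the positive ground states $\phi_0^r$ (using $\lambda_0(\Delta_r) \to \lambda_0(\Delta)$) is a sound variant but not needed. (One caveat you share with the paper: for $0 \le \lambda$ the positivity of $v_r$ does not follow verbatim from the maximum-principle argument of Theorem \ref{stochasticincomp}, which uses $\lambda < 0$; the paper glosses this identically, deferring to \cite{DodKar88}.) Where you genuinely diverge is the converse. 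The paper argues parabolically: if $v>0$ and $\Delta v = \lambda v$, comparing $e^{-\lambda t}v$ with $\sum_{y \in B_r} p_t^r(\cdot,y)v(y)$ via Lemma \ref{maxheatlem1}, and then letting $t \to \infty$ in the eigenfunction expansion of Proposition \ref{Dirichletheatkernels} (which gives $e^{\lambda_0^r t}p_t^r(x,y) \to \phi_0^r(x)\phi_0^r(y) > 0$), forces $\lambda \le \lambda_0(\Delta_r)$ for every $r$. Your route is the variational ground-state substitution, and the step you flag as delicate does close with no uncontrolled remainder: for $f = vg \in C_0(V)$ one has the per-edge identity
\[
\big(v(x)g(x)-v(y)g(y)\big)^2 \;=\; v(x)v(y)\big(g(x)-g(y)\big)^2 \;+\; \big(v(x)-v(y)\big)\big(v(x)g(x)^2-v(y)g(y)^2\big),
\]
where the second factor of the cross term is $d(vg^2)$, not $d(g^2)$ as you guessed (though you hedged); summing over edges and applying Lemma \ref{Green's} to the finitely supported function $vg^2 = f^2/v$ yields
\[
\la \Delta f, f \ra \;=\; \sum_{[x,y]\in\tilde{E}} v(x)v(y)\big(dg([x,y])\big)^2 \;+\; \la \Delta v, f^2/v \ra \;\ge\; \lambda \la f, f \ra,
\]
hence $\lambda_0(\Delta) \ge \lambda$, the desired contradiction. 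As for what each approach buys: the paper's proof recycles machinery it has already built (the Dirichlet heat kernels and their expansions) and needs no new algebraic identity, while yours is more elementary and self-contained --- no heat kernel, no long-time asymptotics --- and proves slightly more, since the computation only uses $\la \Delta v, f^2/v \ra \ge \lambda \la f,f \ra$ against the non-negative function $f^2/v$, so it also rules out positive functions with $\Delta v \ge \lambda v$ (a discrete Allegretto--Piepenbrink statement).
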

\begin{proof}
For $\lambda \leq \lambda_0(\Delta) \leq \lambda_0(\Delta_r)$ note that, as in the proof of the implication $2) \Rightarrow 2')$ of Theorem \ref{stochasticincomp}, there exists a function satisfying $\Delta v_r = \lambda v_r$ on the interior of $B_r$ such that $v_{r| \partial B_r}=1$.  It was shown there that $0 < v_r<1 $ on the interior of $B_r$. Let $w_r = \frac{v_r}{v_r(x_0)}$ to obtain a positive function which is $\lambda$-harmonic on the interior of $B_r$ and satisfies $w_r(x_0)=1$.   By a Harnack inequality argument the sequence $\{ w_r(x) \}_{r=0}^\infty$ is bounded for every fixed vertex therefore, by the diagonal process, we may find a subsequence which converges for all vertices (see \cite{DodKar88}*{Proposition 1.5} and \cite{Woj07} for details).  This gives the proof of the first statement.

For the second statement, if $v$ is a positive function satisfying $\Delta v= \lambda v$ then $u(x,t)=e^{-\lambda t}v(x)$ and $w(x,t) = \sum_{y \in B_r} p_t^r(x,y)v(y)$ both satisfy the heat equation on int $B_r \times [0,T]$.  Applying Lemma \ref{maxheatlem1} to $u-w$ implies that $u-w \geq 0$ on $B_r \times [0,T]$.  Therefore, $e^{-\lambda t}v(x) \geq \sum_{y \in B_r} p_t^r(x,y)v(y)$, which we write as
\begin{equation} \label{t to infty}
v(x) \geq e^{(\lambda - \lambda_0^r)t} e^{\lambda_0^r t} \sum_{y \in
B_r}p_t^r(x,y).
\end{equation}
Now, $p_t^r(x,y) = \sum_{i=0}^{k(r)} e^{-\lambda_i^r t} \phi_i^r(x) \phi_i^r(y)$ implies that
\[ \lim_{t \to \infty} e^{\lambda_0^r t} p_t^r(x,y) = \phi_0^r(x) \phi_0^r(y). \]
By applying Lemma \ref{maxheatlem2}, $\phi_0^r$ can be chosen so that $\phi_0^r(x)>0$ for $x$ in the interior of $B_r$.  Therefore, if $\lambda > \lambda_0^r=\lambda_0(\Delta_r)$  then the right hand side of (\ref{t to infty}) would go to $\infty$ as $t \to \infty$. Hence, $\lambda \leq \lambda_0(\Delta_r)$ for all $r$ implying $\lambda \leq \lambda_0(\Delta)$.
\end{proof}

\subsection{Lower Bounds}
We now use the approach in \cite{DodKar88} to prove a lower bound on the bottom of the spectrum under a curvature assumption on the graph.  In order to take advantage of a lower bound on the bottom of the spectrum in terms of Cheeger's constant proved in \cite{DodKen86} we have to introduce an operator related to the Laplacian $\Delta$.  Specifically, we denote by $\Delta_{bd}$ the \emph{bounded} or \emph{combinatorial} Laplacian which is given by
\[ \Delta_{bd} f(x) = \frac{1}{m(x)} \sum_{y \sim x} \big(f(x) - f(y) \big) = \frac{1}{m(x)} \Delta f(x). \]
This operator acts on the Hilbert space 
\[ \ell^2_{bd}(V) = \{ f:V \to \mathbf{R} \ | \ \sum_{x \in V} m(x) f(x)^2 < \infty \} \]
with inner product $\la f, g \ra_{bd} = \sum_{x \in V}m(x) f(x)g(x). $ $\Delta_{bd}$ is a self-adjoint, bounded operator with $|| \Delta_{bd} || \leq 2$.  Furthermore, using the technique in the proof of Theorem \ref{generalgraphs}, it can be shown that the heat kernel associated to $\Delta_{bd}$, that is, $e^{-t \Delta_{bd}}\delta_x(y)$, is stochastically complete for all graphs.  In particular, since the various characterizations of stochastic incompleteness given in Theorem \ref{stochasticincomp} hold for $\Delta_{bd}$ as well as for $\Delta$, bounded solutions of the heat equation involving $\Delta_{bd}$ are uniquely determined by initial conditions for any graph \cite{Woj07}.

The bottom of the spectrum of $\Delta_{bd}$ is, as for $\Delta$, given by
\[\lambda_0(\Delta_{bd}) = \inf_{f \in \ell^2(V)_{bd} \setminus \{ 0\}} \frac{\la \Delta_{bd} f, f \ra_{bd}}{\la f, f \ra_{bd}}.\]
For any finite subgraph $D$ of $G$ we let $L(\partial D) = | \{ y \sim x \ | \ x \in D, y \not \in D \} |$ denote the number of edges with exactly one vertex in $D$ and $A(D) = \sum_{x \in D} m(x)$.  \emph{Cheeger's constant} is then be defined as $\alpha(G) = \inf_{\substack{ D \subset G \\ D \textrm{ finite}}} \frac{L(\partial D)}{A(D)}.$  The Theorem in \cite{DodKen86} states that
\[ \lambda_0(\Delta_{bd}) \geq \frac{1}{2} \alpha^2(G). \]

In fact, the proof in \cite{DodKen86} applies in the following more general context. Let $A$ denote a finite subgraph of $G$ and let $\Delta_{bd, G \setminus A}$ denote the reduced bounded Laplacian which is equal to $\Delta_{bd}$ on the complement of $A$ and 0 on $A$.  $\Delta_{bd, G \setminus A}$ acts on the space of $\ell^2_{bd}$ functions which vanish on $A$.  If $\alpha(G \setminus A) = \inf_{\substack{ D \textrm{ finite} \\ D \cap A = \emptyset}} \frac{L(\partial D)}{A(D)}$ then the proof in \cite{DodKen86} gives
\begin{equation}\label{Cheeg}
\lambda_0(\Delta_{bd, G \setminus A}) \geq \frac{1}{2} \alpha^2(G \setminus A).
\end{equation}

As usual, we fix a vertex $x_0 \in G$ and let $r(x) = d(x,x_0)$.  We let $m_{+1}(x) = | \{y \sim x \ | \ r(y)=r(x)+1 \} |$ and $m_{-1}(x)= | \{ y \sim x \ | \ r(y)=r(x)-1 \} |$ denote the number of vertices that are 1 step further and 1 step closer to $x_0$ then is $x$ as before.  It follows by an easy calculation that $\frac{m_{+1}(x)-m_{-1}(x)}{m(x)} \geq c > 0$ if and only if $\Delta_{bd}r(x) \leq -c < 0$.

\begin{lemma}\cite{DodKar88}*{Lemma 1.15}\label{Cheegbound}
If $A$ is a finite subgraph of $G$ and for $x \in G \setminus A$ 
\begin{equation} \label{curvature}
\frac{m_{+1}(x) - m_{-1}(x)}{m(x)} \geq  c >0
\end{equation}
then $\alpha(G \setminus A) \geq c.$
\end{lemma}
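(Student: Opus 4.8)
The plan is to show directly that for every finite subgraph $D$ of $G$ with $D \cap A = \emptyset$ one has $L(\partial D) \geq c \, A(D)$; taking the infimum over all such $D$ then yields $\alpha(G \setminus A) \geq c$ by the definition of $\alpha(G \setminus A)$. The starting point is the curvature hypothesis (\ref{curvature}): since $D$ is disjoint from $A$, every vertex $x \in D$ lies in $G \setminus A$, so $m_{+1}(x) - m_{-1}(x) \geq c\, m(x)$. Summing over $x \in D$ gives
\[
\sum_{x \in D} \big( m_{+1}(x) - m_{-1}(x) \big) \geq c \sum_{x \in D} m(x) = c\, A(D).
\]
It therefore suffices to prove the purely combinatorial inequality $\sum_{x \in D}(m_{+1}(x) - m_{-1}(x)) \leq L(\partial D)$.

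To establish this, I would interpret both sums as counts of oriented radial edges. Since adjacent vertices have distances from $x_0$ differing by at most one, call an edge $\{u,v\}$ \emph{radial} if $r(v) = r(u) + 1$, and orient it from the nearer vertex $u$ to the farther vertex $v$. Then $\sum_{x \in D} m_{+1}(x)$ counts the radial edges whose nearer endpoint lies in $D$, while $\sum_{x \in D} m_{-1}(x)$ counts the radial edges whose farther endpoint lies in $D$. Classifying each radial edge by which of its endpoints belong to $D$, the contributions of edges with both or neither endpoint in $D$ cancel, leaving
\[
\sum_{x \in D}\big( m_{+1}(x) - m_{-1}(x) \big) = N_{\mathrm{out}} - N_{\mathrm{in}},
\]
where $N_{\mathrm{out}}$ (respectively $N_{\mathrm{in}}$) is the number of radial edges having only the nearer (respectively only the farther) endpoint in $D$.

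Both $N_{\mathrm{out}}$ and $N_{\mathrm{in}}$ count edges with exactly one endpoint in $D$, so $N_{\mathrm{out}} - N_{\mathrm{in}} \leq N_{\mathrm{out}} \leq L(\partial D)$, since $L(\partial D)$ tallies all boundary edges, radial or not. Combining this with the displayed lower bound gives $c\, A(D) \leq L(\partial D)$, the desired inequality. The argument is essentially bookkeeping, and I anticipate no analytic obstacle; the one point requiring care is the orientation step, namely keeping straight that a radial boundary edge with nearer endpoint in $D$ is counted positively through $m_{+1}$ while one with farther endpoint in $D$ is counted negatively through $m_{-1}$, so that discarding the nonnegative term $N_{\mathrm{in}}$ correctly yields the upper bound by the total boundary count.
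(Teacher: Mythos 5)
Your proof is correct and takes essentially the same route as the paper: both arguments reduce the lemma to showing $c\,A(D) \leq \sum_{x \in D}\big(m_{+1}(x)-m_{-1}(x)\big) \leq L(\partial D)$ for every finite $D$ disjoint from $A$, and then take the infimum. The only difference is presentational: the paper obtains the second inequality by applying its Green's formula (Lemma \ref{Green's}) to the distance function $r$, using $\Delta r(x) = m_{-1}(x)-m_{+1}(x)$ and bounding each boundary term by $1$, whereas you prove the same net-flux identity by directly double-counting radial edges --- the substance is identical.
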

\begin{proof}
Lemma \ref{Green's} implies that
\[ \left| \sum_{x \in D} \Delta_{bd}r(x)m(x) \right| = \left| \sum_{x \in D} \Delta r(x) \right| =\left| \sum_{\substack{ x \in \partial D \\ y \sim x, y \not \in D}} \big( r(x) - r(y) \big) \right| \leq L(\partial D). \]
On the other hand, a calculation and (\ref{curvature}) imply
\[ \left| \sum_{x \in D} \Delta_{bd}r(x)m(x) \right| =  \sum_{x \in D}|m_{+1}(x)-m_{-1}(x)| \geq c \sum_{x \in D} m(x) = c A(D) \]
so that $c \leq \frac{L(\partial D)}{A(D)}$ for all finite subgraphs $D$.  Taking the infimum over all finite subgraphs disjoint from $A$ implies that $\alpha(G\setminus A) \geq c$.
\end{proof}

Combining (\ref{Cheeg}) and Proposition \ref{Cheegbound} we can now state and prove the following theorem:
\begin{theorem}\label{lowerbounds}
If $A$ is a finite subgraph of $G$ and for $x \in G \setminus A$ 
\[ \frac{m_{+1}(x) - m_{-1}(x)}{m(x)} \geq  c >0 \]
then $\lambda_0(\Delta_{bd, G \setminus A}) \geq \frac{c^2}{2}.$  If, in addition,  $m(x) \geq m$ for  $x \in G \setminus A$  then
\begin{equation}\label{lambda0}
\lambda_0(\Delta_{G \setminus A}) \geq \frac{mc^2}{2}.
\end{equation}
\end{theorem}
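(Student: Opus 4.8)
The plan is to prove the two statements in turn: the bound for the bounded Laplacian $\Delta_{bd,G\setminus A}$ follows by assembling results already in place, while the bound for the unbounded $\Delta_{G\setminus A}$ is obtained by comparing Rayleigh quotients of the two operators.

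For the first statement I would simply chain the two facts already available. Under the curvature hypothesis (\ref{curvature}), Lemma~\ref{Cheegbound} yields $\alpha(G\setminus A)\geq c$, and inequality (\ref{Cheeg}), which is the Dodziuk--Kendall estimate in the reduced setting, gives $\lambda_0(\Delta_{bd,G\setminus A})\geq\frac{1}{2}\alpha^2(G\setminus A)$. Combining these immediately produces $\lambda_0(\Delta_{bd,G\setminus A})\geq\frac{c^2}{2}$, so this part requires no real work.

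For the second statement I would exploit the variational description of the bottom of the spectrum together with the precise relationship between the two Laplacians and their weighted inner products. The key identity is
\[ \la \Delta_{bd} f, f \ra_{bd} = \sum_{x} m(x)\cdot \tfrac{1}{m(x)}\Delta f(x)\cdot f(x) = \la \Delta f, f \ra, \]
valid for finitely supported $f$, so the two quadratic forms coincide even though their normalizing norms differ. Since the infimum defining $\lambda_0(\Delta_{G\setminus A})$ is taken over functions vanishing on $A$, every vertex contributing to a norm lies in $G\setminus A$, where $m(x)\geq m$; hence $\la f, f \ra_{bd}=\sum_{x}m(x)f(x)^2\geq m\la f, f\ra$. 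Because $\la \Delta_{bd} f, f\ra_{bd}=\la df, df\ra\geq 0$ by Lemma~\ref{Green's}, these facts combine into
\[ \frac{\la \Delta f, f \ra}{\la f, f \ra} = \frac{\la \Delta_{bd} f, f \ra_{bd}}{\la f, f \ra} \geq m\,\frac{\la \Delta_{bd} f, f \ra_{bd}}{\la f, f \ra_{bd}} \geq m\,\lambda_0(\Delta_{bd,G\setminus A}). \]
Taking the infimum over all admissible test functions and invoking the first statement then gives $\lambda_0(\Delta_{G\setminus A})\geq m\lambda_0(\Delta_{bd,G\setminus A})\geq\frac{mc^2}{2}$.

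The main obstacle is not any hard estimate but bookkeeping with the two different inner products: the quadratic forms agree, while the norms differ by the weight $m(x)$, and one must check that the weight inequality runs in the favorable direction. It does precisely because $\la \Delta_{bd} f, f\ra_{bd}$ is nonnegative and we are bounding a Rayleigh quotient from below. The only subtlety worth confirming explicitly is that the variational formula for $\lambda_0(\Delta_{G\setminus A})$ ranges over functions vanishing on $A$, so that the hypothesis $m(x)\geq m$ applies on the full support of every test function.
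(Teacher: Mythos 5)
Your proposal is correct and follows essentially the same route as the paper: the first bound by chaining Lemma~\ref{Cheegbound} with (\ref{Cheeg}), and the second by comparing Rayleigh quotients via $\la \Delta_{bd} f, f \ra_{bd} = \la \Delta f, f \ra$ and $\la f, f \ra_{bd} \geq m \la f, f \ra$ for test functions vanishing on $A$. Your explicit remarks on the nonnegativity of the quadratic form and on the support of the test functions only make precise what the paper's proof uses implicitly.
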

\begin{proof}
The first statement is a direct consequence of Lemma \ref{Cheegbound} and (\ref{Cheeg}).  To obtain the second statement note that, for any finitely supported function $f$, $\la \Delta_{bd} f, f \ra_{bd} = \la \Delta f, f \ra$ while, if $m(x) \geq m$, then $\la f, f \ra_{bd} \geq m \la f, f \ra$.  This estimate and the first statement in the theorem then combine to give
\[ \frac{c^2}{2} \leq \lambda_0(\Delta_{bd}) \leq \frac{\la \Delta_{bd} f, f \ra_{bd}}{\la f, f \ra_{bd}} \leq  \frac{\la \Delta f, f \ra}{m \la f, f\ra} \]
which, after taking the infimum over finitely supported functions which vanish on $A$, implies (\ref{lambda0}).
\end{proof}

\subsection{Essential Spectrum}
We now use Theorem \ref{lowerbounds} to give a condition under which the Laplacian $\Delta$ has empty essential spectrum.  In \cite{Fuj96}*{Theorem 1} it was shown that the essential spectrum of the bounded Laplacian $\Delta_{bd}$ shrinks to a point if and only if Cheeger's constant at infinity is equal to 1.  Using this result it is shown in \cite{Kel07}*{Theorem 2} that, if Cheeger's constant at infinity is positive, $\Delta$ has empty essential spectrum if and only if the graph is branching rapidly.

We first recall a characterization of the essential spectrum. It can be shown that $\lambda$ is in the essential spectrum of $\Delta$, denoted $\lambda \in \sigma_{ess}(\Delta)$, if and only if there exists a sequence $f_i$ in the domain of $\Delta$ which is orthonormal and satisfies $\Delta f_i - \lambda f_i \to 0$ \cite{ReeSim72}*{Theorem VII.12 and remarks following Theorem VIII.6}.  Here, the domain of $\Delta$ consists of $f \in \ell^2(V)$
such that $\Delta f \in \ell^2(V)$.  We denote by $\Delta_{G \setminus B_r}$ the reduced Laplacian which is equal to  $\Delta$ on the complement of $B_r$ and 0 on $B_r$ as in the previous subsection.  Using the characterization of the essential spectrum mentioned above one shows that
\begin{lemma}\citelist{ \cite{DonLi79}*{Proposition 2.1} \cite{Fuj96}*{Lemma 1} } \label{ess}
\[\sigma_{ess}(\Delta) = \sigma_{ess}(\Delta_{G \setminus B_r}).\]
\end{lemma}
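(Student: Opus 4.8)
The plan is to prove both inclusions via the Weyl criterion quoted just above the statement, exploiting the fact that $\Delta$ and $\Delta_{G \setminus B_r}$ differ by a finite-rank operator. First I would set $K = \Delta - \Delta_{G \setminus B_r}$ and observe that $Kf$ is supported on the finite set $B_r$, with $Kf(x) = \Delta f(x)$ for $x \in B_r$ and $Kf(x)=0$ otherwise. Since $G$ is locally finite and $B_r$ is finite, the values $Kf(x)$ for $x \in B_r$ involve only $f(x)$ and the values of $f$ at neighbors of $x$, all of which lie in the finite set $B_{r+1}$. Hence $K$ depends only on the restriction of $f$ to $B_{r+1}$ and has range inside the functions supported on $B_r$, so $K$ is a bounded, finite-rank operator. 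In particular $\Delta = \Delta_{G\setminus B_r} + K$ with $K$ bounded, so the two operators share the same domain.

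Next, suppose $\lambda \in \sigma_{ess}(\Delta)$ and let $f_i$ be an orthonormal sequence in the domain of $\Delta$ with $\Delta f_i - \lambda f_i \to 0$. An orthonormal sequence in a Hilbert space converges weakly to $0$, so for each fixed vertex $x$ we have $f_i(x) = \la f_i, \delta_x \ra \to 0$ (here $\delta_x \in \ell^2(V)$). Because $B_{r+1}$ is finite, this gives $\sum_{x \in B_{r+1}} f_i(x)^2 \to 0$, and therefore $\| K f_i \| \to 0$, since $K$ depends only on the values of $f_i$ on $B_{r+1}$. Writing $\Delta_{G \setminus B_r} f_i - \lambda f_i = (\Delta f_i - \lambda f_i) - K f_i$, both terms on the right tend to $0$, so $f_i$ is an orthonormal Weyl sequence for $\Delta_{G \setminus B_r}$ and hence $\lambda \in \sigma_{ess}(\Delta_{G \setminus B_r})$.

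The reverse inclusion is entirely symmetric. Starting from an orthonormal sequence $g_i$ with $\Delta_{G \setminus B_r} g_i - \lambda g_i \to 0$, the same weak-convergence argument yields $\| K g_i \| \to 0$, and then $\Delta g_i - \lambda g_i = (\Delta_{G \setminus B_r} g_i - \lambda g_i) + K g_i \to 0$ exhibits $g_i$ as a Weyl sequence for $\Delta$, so $\lambda \in \sigma_{ess}(\Delta)$. Combining the two inclusions gives the claimed equality.

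The only real subtlety, and the step I would be most careful about, is the verification that $K$ is finite rank and that the two operators have a common domain; this is precisely where local finiteness of $G$ enters, guaranteeing that the ``collar'' of the finite ball $B_r$ is itself finite. Once $K$ is recognized as a finite-rank (hence compact) perturbation, the invariance of the essential spectrum is essentially automatic, and the passage from pointwise convergence $f_i(x)\to 0$ to the norm convergence $\|Kf_i\|\to 0$ on the finite set $B_{r+1}$ is the routine mechanism that converts the Weyl criterion into the conclusion.
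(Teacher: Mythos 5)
The paper does not actually supply a proof of this lemma: it is imported from \cite{DonLi79}*{Proposition 2.1} and \cite{Fuj96}*{Lemma 1}, with only the indication that it follows from the orthonormal Weyl-sequence characterization of $\sigma_{ess}$ quoted just before the statement. Your plan --- write the difference of the two operators as a bounded finite-rank $K$, and use the fact that an orthonormal sequence converges weakly to $0$, so that $\|Kf_i\|\to 0$ because $Kf_i$ is determined by the finitely many values of $f_i$ on $B_{r+1}$ --- is precisely the mechanism behind those references, and your proof of the inclusion $\sigma_{ess}(\Delta_{G\setminus B_r}) \subseteq \sigma_{ess}(\Delta)$ is correct as written, because there the Weyl sequence $g_i$ genuinely vanishes on $B_r$.

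The opposite inclusion has a genuine gap, sitting exactly where you flagged the subtlety. By the paper's convention (see the definition of $\Delta_{bd, G\setminus A}$ in the preceding subsection), $\Delta_{G\setminus B_r}$ acts on the space of $\ell^2$ functions vanishing on $B_r$, i.e.\ on $\ell^2(V\setminus B_r)$, not on $\ell^2(V)$. Your $f_i$ need not vanish on $B_r$, so $f_i$ is not in the domain of $\Delta_{G\setminus B_r}$ and the assertion that ``$f_i$ is an orthonormal Weyl sequence for $\Delta_{G\setminus B_r}$'' does not parse. Relatedly, your description of $K$ (output supported on $B_r$, equal to $\Delta f$ there) tacitly extends $\Delta_{G\setminus B_r}$ to all of $\ell^2(V)$ as $P\Delta$, with $P$ the orthogonal projection onto functions vanishing on $B_r$; that extension is not self-adjoint and is not the operator in the lemma. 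The correct comparison operator is the block sum $\Delta_{G\setminus B_r}\oplus 0$ on $\ell^2(V\setminus B_r)\oplus\ell^2(B_r)=\ell^2(V)$, i.e.\ $P\Delta P$ extended by zero; its difference with $\Delta$ is still bounded and of finite rank, but its output also has entries on $S_{r+1}$, namely $-\sum_{y\sim x,\, y\in S_r}f(y)$ at $x\in S_{r+1}$, all of which your weak-convergence argument still handles. With this $K$ your limit shows $\lambda\in\sigma_{ess}(\Delta_{G\setminus B_r}\oplus 0)$, and one finishing step is then required: either observe that the zero block acts on a finite-dimensional space and hence contributes nothing to the essential spectrum, so $\sigma_{ess}(\Delta_{G\setminus B_r}\oplus 0)=\sigma_{ess}(\Delta_{G\setminus B_r})$; or replace $f_i$ by $Pf_i$, check that
\[ (\Delta_{G\setminus B_r}-\lambda)Pf_i = P\big((\Delta-\lambda)f_i\big) - P\Delta\big((I-P)f_i\big) \to 0 \]
(the second term involves only the values of $f_i$ on the finite set $B_r$), and deal with the fact that the $Pf_i$ are only asymptotically orthonormal, e.g.\ via the equivalent form of Weyl's criterion for normalized weakly-null sequences. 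Either repair is routine, but some such step is needed before the first inclusion is complete.
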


Let $\underline{m}_c(r) = \inf_{x \in G \setminus B_r} m(x)$ denote the infimum of the valence of vertices outside the ball of radius $r$ about $x_0$.  Combining Lemma \ref{ess} and Theorem \ref{lowerbounds} we obtain the following criterion
\begin{theorem}
If for all vertices $x$ of $G$
\[ \frac{m_{+1}(x) - m_{-1}(x)}{m(x)} \geq  c >0 \]
and if $\underline{m}_c(r) \to \infty$ as $r \to \infty$ then $\sigma_{ess}(\Delta) = \emptyset$.
\end{theorem}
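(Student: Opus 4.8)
The plan is to show that the infimum of the essential spectrum can be forced above any prescribed bound, so that $\sigma_{ess}(\Delta)$ must be empty. The two tools I would combine are the stability result of Lemma \ref{ess}, which lets me excise any finite ball $B_r$ without changing the essential spectrum, and the lower bound of Theorem \ref{lowerbounds}, which turns a uniform lower bound on the valence outside a finite set, together with the curvature hypothesis, into a lower bound on the bottom of the spectrum of the corresponding reduced Laplacian $\Delta_{G \setminus B_r}$.

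First I would fix an arbitrary constant $m > 0$. Since $\underline{m}_c(r) = \inf_{x \in G \setminus B_r} m(x) \to \infty$ as $r \to \infty$, there exists $r = r(m)$ such that $m(x) \geq m$ for every $x \in G \setminus B_r$. The curvature hypothesis $\frac{m_{+1}(x) - m_{-1}(x)}{m(x)} \geq c > 0$ is assumed at every vertex, so in particular it holds on $G \setminus B_r$. Taking $A = B_r$ in Theorem \ref{lowerbounds}, both of its hypotheses are met, and I obtain $\lambda_0(\Delta_{G \setminus B_r}) \geq \frac{mc^2}{2}$.

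Next I would pass from the bottom of the spectrum to the bottom of the essential spectrum. Since $\Delta_{G \setminus B_r} \geq 0$, its spectrum lies in $[\lambda_0(\Delta_{G \setminus B_r}), \infty)$, and the essential spectrum, being a subset of the full spectrum, satisfies $\inf \sigma_{ess}(\Delta_{G \setminus B_r}) \geq \lambda_0(\Delta_{G \setminus B_r}) \geq \frac{mc^2}{2}$. Lemma \ref{ess} gives $\sigma_{ess}(\Delta) = \sigma_{ess}(\Delta_{G \setminus B_r})$, whence $\inf \sigma_{ess}(\Delta) \geq \frac{mc^2}{2}$. As $m$ was arbitrary and $c$ is fixed, letting $m \to \infty$ forces $\inf \sigma_{ess}(\Delta) = \infty$; since $\sigma_{ess}(\Delta)$ is a closed subset of $[0,\infty)$, this is possible only if $\sigma_{ess}(\Delta) = \emptyset$.

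The step I expect to require the most care is not any single inequality but the spectral-theoretic bookkeeping. One must work throughout with the self-adjoint Laplacian: here the valence is unbounded, so $\Delta$ is genuinely an unbounded operator, and it is essential self-adjointness (noted earlier in the paper) that justifies the Weyl-sequence characterization of $\sigma_{ess}$ underlying Lemma \ref{ess}. One must likewise be sure that the variational quantity $\lambda_0(\Delta_{G \setminus B_r})$ produced by Theorem \ref{lowerbounds} really is the bottom of the spectrum of the reduced self-adjoint operator, so that the inequality $\inf \sigma_{ess} \geq \lambda_0$ is legitimate. Once these identifications are secured, the conclusion is a direct combination of the two quoted results.
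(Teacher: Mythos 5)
Your proof is correct and follows exactly the paper's own argument: apply Theorem \ref{lowerbounds} with $A = B_r$ to get $\lambda_0(\Delta_{G \setminus B_r}) \to \infty$, invoke Lemma \ref{ess} to keep the essential spectrum fixed under removal of $B_r$, and use that the essential spectrum is contained in the spectrum. Your version merely makes the limiting step and the spectral-theoretic bookkeeping more explicit than the paper does.
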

\begin{proof}
Applying (\ref{lambda0}) in Theorem \ref{lowerbounds} to $\lambda_0(\Delta_{G \setminus B_r})$ implies that  $\lambda_0(\Delta_{G \setminus B_r}) \to \infty$ as $r \to \infty$ since $\underline{m}_c(r) \to \infty$.  Meanwhile, by Lemma \ref{ess}, the essential spectrum of $\Delta$ remains unchanged after the removal of $B_r$.  Since the essential spectrum is a subset of the spectrum the result follows.
\end{proof}

\begin{bibdiv}
\begin{biblist}

\bib{CheYau81}{article}{
    author={Cheeger, Jeff},
    author={Yau, Shing-Tung},
    title={A lower bound for the heat kernel},
    journal={Comm. Pure Appl. Math.},
    volume={34},
    date={1981},
    number={4},
    pages={465--480},
    issn={0010-3640},
    review={\MR{615626 (82i:58065)}},
}

\bib{Dod83}{article}{
    author={Dodziuk, J{\'o}zef},
    title={Maximum principle for parabolic inequalities and the heat flow on
    open manifolds},
    journal={Indiana Univ. Math. J.},
    volume={32},
    date={1983},
    number={5},
    pages={703--716},
    issn={0022-2518},
    review={\MR{711862 (85e:58140)}},
}

\bib{Dod84}{article}{
    author={Dodziuk, J{\'o}zef},
    title={Difference equations, isoperimetric inequality and transience of
    certain random walks},
    journal={Trans. Amer. Math. Soc.},
    volume={284},
    date={1984},
    number={2},
    pages={787--794},
    issn={0002-9947},
    review={\MR{743744 (85m:58185)}},
}

\bib{Dod07}{article}{
    author={Dodziuk, J{\'o}zef},
   title={Elliptic operators on infinite graphs},
   conference={
      title={Analysis, geometry and topology of elliptic operators: papers in honor of Krzysztof P. Wojciechowski},
   },
   book={
      publisher={World Sci. Publ., Hackensack, NJ},
   },
   date={2006},
   pages={353--368},
   review={\MR{2246774}},
}

\bib{DodKar88}{article}{
    author={Dodziuk, J{\'o}zef},
    author={Karp, Leon},
    title={Spectral and function theory for combinatorial Laplacians},
    conference={
        title={Geometry of random motion},
        address={Ithaca, N.Y.},
        date={1987},
    },
    book={
        series={Contemp. Math.},
        volume={73},
        publisher={Amer. Math. Soc.},
        place={Providence, RI},
    },
    date={1988},
    pages={25--40},
    review={\MR{954626 (89h:58220)}},
}

\bib{DodKen86}{article}{
    author={Dodziuk, J{\'o}zef},
    author={Kendall, Wilfrid S.},
    title={Combinatorial Laplacians and isoperimetric inequality},
    conference={
        title={From local times to global geometry, control and physics (Coventry, 1984/85)},
    },
    book={
        series={Pitman Res. Notes Math. Ser.},
        volume={150},
        publisher={Longman Sci. Tech.},
        place={Harlow},
    },
    date={1986},
    pages={68--74},
    review={\MR{894523 (88h:58118)}},
}

\bib{DodMat06}{article}{
    author={Dodziuk, J{\'o}zef},
    author={Mathai, Varghese},
    title={Kato's inequality and asymptotic spectral properties for discrete
    magnetic Laplacians},
    conference={
        title={The ubiquitous heat kernel},
    },
    book={
        series={Contemp. Math.},
        volume={398},
        publisher={Amer. Math. Soc.},
        place={Providence, RI},
    },
    date={2006},
    pages={69--81},
    review={\MR{2218014 (2007c:81054)}},
}

\bib{DonLi79}{article}{
    author={Donnelly, Harold},
    author={Li, Peter},
    title={Pure point spectrum and negative curvature for noncompact
    manifolds},
    journal={Duke Math. J.},
    volume={46},
    date={1979},
    number={3},
    pages={497--503},
    issn={0012-7094},
    review={\MR{544241 (80j:35075)}},
}

\bib{FisSch80}{article}{
    author={Fischer-Colbrie, Doris},
    author={Schoen, Richard},
    title={The structure of complete stable minimal surfaces in $3$-manifolds of nonnegative scalar curvature},
    journal={Comm. Pure Appl. Math.},
    volume={33},
    date={1980},
    number={2},
    pages={199--211},
    issn={0010-3640},
    review={\MR{562550 (81i:53044)}},
}

\bib{Fuj96}{article}{
    author={Fujiwara, Koji},
    title={The Laplacian on rapidly branching trees},
    journal={Duke Math. J.},
    volume={83},
    date={1996},
    number={1},
    pages={191--202},
    issn={0012-7094},
    review={\MR{1388848 (97d:58199)}},
}

\bib{GreWu79}{book}{
    author={Greene, Robert E.},
    author={Wu, Hung-Hsi},
    title={Function theory on manifolds which possess a pole},
    series={Lecture Notes in Mathematics},
    volume={699},
    publisher={Springer},
    place={Berlin},
    date={1979},
    pages={ii+215},
    isbn={3-540-09108-4},
    review={\MR{521983 (81a:53002)}},
}

\bib{Gri99}{article}{
    author={Grigor'yan, Alexander},
    title={Analytic and geometric background of recurrence and non-explosion
    of the Brownian motion on Riemannian manifolds},
    journal={Bull. Amer. Math. Soc. (N.S.)},
    volume={36},
    date={1999},
    number={2},
    pages={135--249},
    issn={0273-0979},
    review={\MR{1659871 (99k:58195)}},
}

\bib{Hil59}{book}{
   author={Hille, Einar},
   title={Analytic function theory. Vol. 1},
   series={Introduction to Higher Mathematics},
   publisher={Ginn and Company, Boston},
   date={1959},
   pages={xi+308},
   review={\MR{0107692 (21 \#6415)}},
}

\bib{Kel07}{article}{
    author={Keller, Matthias},
    title={The essential spectrum of the Laplacian on rapidly branching tessellations},
    date={2007},
    eprint={arXiv:0712.3816v2 [math-ph]},
}

\bib{ReeSim72}{book}{
    author={Reed, Michael},
    author={Simon, Barry},
    title={Methods of modern mathematical physics. I. Functional analysis},
    publisher={Academic Press},
    place={New York},
    date={1972},
    pages={xvii+325},
    review={\MR{0493419 (58 \#12429a)}},
}

\bib{Sul86}{article}{
    author={Sullivan, Dennis},
    title={Related aspects of positivity: $\lambda$-potential theory on
    manifolds, lowest eigenstates, Hausdorff geometry, renormalized Markoff
    processes$\ldots\,$},
    conference={
        title={Aspects of mathematics and its applications},
    },
    book={
        series={North-Holland Math. Library},
        volume={34},
        publisher={North-Holland},
        place={Amsterdam},
    },
    date={1986},
    pages={747--779},
    review={\MR{849589 (87h:58223)}},
}

\bib{Web08}{article}{
    author={Weber, Andreas},
    title={Analysis of the physical Laplacian and the heat flow on a locally finite graph},
    date={2008},
    eprint={arXiv:0801.0812v2 [math.SP]},
}

\bib{Woj07}{thesis}{
    author={Wojciechowski, Rados{\l}aw},
    title={Stochastic completeness of graphs},
    type={Ph.D. Thesis},
    date={2007},
    eprint={arXiv:0712.1570v2 [math.SP]},
}
\end{biblist}
\end{bibdiv}

\end{document}